\def\r{\rightarrow}
\newcommand{\fdem}{\hspace*{\fill}~$\Box$\par\endtrivlist\unskip}
\newcommand{\E}{\mathbb{E}}     
\renewcommand{\P}{\mathbb{P}}     
\newcommand{\F}{\mathbb{F}}     
\newcommand{\N}{\mathbb{N}}     
\newcommand{\Z}{\mathbb{Z}}
\newcommand{\R}{\mathbb{R}}     
\newcommand{\C}{\mathbb{C}} 
\newcommand{\X}{\mathbb{X}} 
\newcommand{\T}{\mathbb{T}}
\renewcommand{\r}{\mathop{\rightarrow}}
\newcommand{\cA}{\mbox{$\cal A$}}
\newcommand{\cC}{\mbox{$\cal C$}}
\newcommand{\cD}{\mbox{$\cal D$}}
\newcommand{\cF}{\mbox{$\cal F$}}
\newcommand{\cG}{\mbox{$\cal G$}}
\newcommand{\cM}{\mbox{$\cal M$}}
\newcommand{\cN}{\mbox{$\cal N$}}
\newcommand{\cP}{\mbox{$\cal P$}}
\newcommand{\cS}{\mbox{$\cal S$}}
\newcommand{\cY}{\mbox{$\cal Y$}}
\newcommand{\cU}{\mbox{$\cal U$}}
\newtheorem{theo}{Theorem}[section]
\newtheorem{pro}{Proposition}[section]
\newenvironment{proof}[1]{\textit{Proof#1.\,}}{\fdem}
\newtheorem{lem}{Lemma}[section]
\newtheorem{rem}{Remark}[section]
\newtheorem{ass}{AC\hspace*{-3pt}}
\newtheorem{assu}{U\hspace*{-3pt}}
\newtheorem{assu3}{U3\hspace*{2pt}-\hspace*{-4pt}}
\theoremstyle{break}\theorembodyfont{\rmfamily}
\newcommand{\IP}{\textbf{(I-A)}} 
\newcommand{\M}[1]{\boldsymbol{(\mathrm{M}#1)}} 
\newcommand{\NL}{\textbf{(NL)}}
\begin{document}

\title{Additional material on local limit theorem for finite Additive Markov Processes \cite{HerLed13}}  

\author{Loïc HERV\'E and James LEDOUX  \footnote{
Université
Européenne de Bretagne, I.R.M.A.R. (UMR-CNRS 6625), Institut National des Sciences 
Appliquées de Rennes. Loic.Herve,JamesLedoux@insa-rennes.fr}}

\maketitle

\begin{abstract}
This paper proposes additional material to the main statements of \cite{HerLed13} which are are recalled in Section~\ref{sec-ass-stat}. In particular an application of \cite[Th~2.2]{HerLed13} to Renewal Markov Processes is provided in Section~\ref{MRP} and a detailed checking of the assumptions of \cite[Th~2.2]{HerLed13} for the joint distribution of local times of a finite jump process is reported in Section~\ref{appli-local-times}. A uniform version of \cite[Th~2.2]{HerLed13}  with respect to a compact set of transition matrices is given in Section~\ref{sec-unif} (see \cite[Remark 2.4]{HerLed13}). The basic material on the semigroup of Fourier matrices and the spectral approach used in \cite{HerLed13} is recalled in Section~\ref{proof-LLT} in order to obtain a good understanding of the properties involved in this uniform version. \\[1mm]
\textbf{Keywords:} Gaussian approximation, Local time, Spectral method, Markov random walk.
\end{abstract}

\vspace*{-8mm}
{\small
\tableofcontents
}

\section{Notations}

Any vector $v=(v_k)_{k\in \{1,\ldots,N\}}\in\C^N$ is a considered as a row-vector and $v^{\top}$ is the corresponding column-vector. The vector with all components equal to 1 is denoted by $\mathbf{1}$. The euclidean product scalar and its associated norm  on $\C^N$ is denoted by $\langle\cdot,\cdot\rangle$ and $\| \cdot\|$ respectively.  
The set of $N\times N$-matrices with complex entries is denoted by $\cM_N(\C)$. 
Let $\|\cdot\|_\infty$ denote the supremum norm on $\C^N$:  $\forall v\in\C^N,\  \|v\|_\infty=\max_{k\in\{1,\ldots,N\}} |v_k)|$. For the sake of simplicity, $\|\cdot\|_\infty$ also stands for the associated matrix norm: 
$$\forall A\in\cM_N(\C),\quad \|A\|_\infty := \sup_{\|v\|_\infty=1}\|A v^{\top}\|_\infty.$$
We also use the following norm $\|\cdot\|_0$ on $\cM_N(\C)$:  
$$\forall A=(A_{k,\ell})_{(k,\ell)\in\{1,\ldots,N\}^2}\in\cM_N(\C),\quad \|A\|_0:= \max_{(k,\ell)\in\{1,\ldots,N\}} |A_{k,\ell}|.$$ 
It is easily seen that 
\begin{equation} \label{equiv-norm}
\forall A\in\cM_N(\C),\quad \|A\|_0\leq \|A\|_\infty \leq N \|A\|_0.
\end{equation}

For any bounded positive measure $\nu$ on $\R^d$, we define its Fourier transform as: 
$$\forall \zeta\in\R^d,\quad \widehat\nu(\zeta) := \int_{\R^d}e^{i \langle \zeta , y \rangle}\, d\nu(y).$$
Let  $\cA=(\cA_{k,\ell})_{(k,\ell)\in\{1,\ldots,N\}^2}$ be a $N\times N$-matrix with entries in the set of bounded positive measures on $\R^d$. We set 
\begin{subequations} 
\begin{eqnarray}
\forall B\in B(\R^d), && \cA(1_B) := \big(\cA_{k,\ell}(1_B)\big)_{(k,\ell)\in\X^2} \\
\forall \zeta\in\R^d, && \widehat \cA(\zeta) := (\widehat\cA_{k,\ell}(\zeta))_{(k,\ell)\in\X^2}. \label{def-Four-mat}
\end{eqnarray}
\end{subequations}
We denote by $\lfloor t \rfloor$ the integer part of any $t\in\T$. 
\section{The LLT for the density process} \label{sec-ass-stat}
Let $\{(X_t,Z_t)\}_{t\in \T}$ be an MAP with state space $\X \times \R^d$, where  $\X:=\{1,\ldots,N\}$ and the driving Markov process $\{X_t\}_{t\in\T}$ has transition semi-group  $\{P_t\}_{t\in\T}$. We refer to \cite[Chap. XI]{Asm03} for the basic material on such MAPs. The conditional probability to $\{X_0=k\}$ and its associated expectation are denoted by $\P_k$ and $\E_k$ respectively. Note that if $T :\R^d\r\R^m$ is a linear transformation, then $\{X_t,T(Z_t)\}_{t\in\T}$ is still a MAP on $\X\times R^m$ (see Lemma~\ref{lem_TMAP}). We suppose that  $\{X_t\}_{t\in\T}$ has a unique invariant probability measure $\pi$. Set $m:= \E_{\pi}[Z_1]=\sum_k \pi(k) \E_{k}[Z_1]\in\R^d$. Consider the centered MAP $\{(X_t,Y_t)\}_{t\in \T}$ where $Y_t := Z_t - t\, m$. The two next assumptions are involved in both CLT and LLT. 

\noindent \IP ~: 
{\it The stochastic $N\times N$-matrix $P:=P_1$ is irreducible and aperiodic.}

\noindent $\M{\alpha}$ : 
{\it 
The family of r.v.~$\{Y_v\}_{v\in(0,1]\cap\T}$ satisfies the  uniform moment condition of order $\alpha$:} 
\begin{equation} \label{moment-alpha} 
M_{\alpha}:= \max_{k\in\X} \sup_{v\in(0,1]\cap\T} \E_k\big[\|Y_v\|^{\alpha}\big] < \infty.  
\end{equation}
The next theorem provides a CLT for $t^{-1/2}Y_t$, proved for $d:=1$ in \cite{KeiWis64} for $\T=\N$ and in \cite{FukHit67} for $\T=[0,\infty)$ (see  \cite{FerHerLed12} for $\rho$-mixing driving Markov processes). 
\begin{theo} \label{lem-clt}
Under Assumptions~\emph{\IP}~and $\M{2}$, $\{t^{-1/2}Y_t\}_{t\in \T}$ converges in distribution to a $d$-dimensional Gaussian law $\cN(0,\Sigma)$ when $t\r+\infty$. 
\end{theo}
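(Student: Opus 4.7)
The plan is to prove convergence of characteristic functions via the spectral method applied to the Fourier matrix semigroup associated with the MAP, following the classical Nagaev--Guivarc'h approach. Introduce for $\zeta\in\R^d$ the Fourier matrix $\widehat Q_t(\zeta) := \bigl(\E_k[e^{i\langle \zeta,Z_t\rangle}\, 1_{\{X_t=\ell\}}]\bigr)_{k,\ell}$, which by the MAP property satisfies the semigroup relation $\widehat Q_{t+s}(\zeta) = \widehat Q_t(\zeta)\widehat Q_s(\zeta)$ with $\widehat Q_t(0)=P_t$. The relation
$$\E_k\!\left[e^{i\langle \zeta,\, t^{-1/2}Y_t\rangle}\right] = e^{-i\sqrt{t}\langle \zeta, m\rangle}\bigl(\widehat Q_t(\zeta/\sqrt t)\, \mathbf{1}^{\top}\bigr)_k$$
reduces everything to the asymptotics of $\widehat Q_t(\zeta/\sqrt t)$ as $t\to+\infty$.

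I would first treat the discrete case $\T=\N$. Write $\widehat Q(\zeta):=\widehat Q_1(\zeta)$, so $\widehat Q(0)=P$. Under \IP, the stochastic matrix $P$ has $1$ as a simple eigenvalue with associated spectral projector $\Pi_0=\mathbf{1}^{\top}\pi$ and a spectral gap: all other eigenvalues have modulus strictly less than $1$. The assumption $\M{2}$ forces $\zeta\mapsto \widehat Q(\zeta)$ to be $\cC^2$ in a neighbourhood of $0$, since each entry is the Fourier transform of a bounded measure admitting a second moment. By Kato's analytic perturbation theorem, there exist $\delta>0$, a $\cC^2$ map $\zeta\mapsto\lambda(\zeta)\in\C$ with $\lambda(0)=1$, a rank-one $\cC^2$ projector $\Pi(\zeta)$ with $\Pi(0)=\Pi_0$, and a $\cC^2$ operator $R(\zeta)$ with $\Pi(\zeta)R(\zeta)=R(\zeta)\Pi(\zeta)=0$ and spectral radius bounded by some $\kappa<1$ uniformly for $\|\zeta\|\leq\delta$, such that
$$\widehat Q(\zeta) = \lambda(\zeta)\Pi(\zeta) + R(\zeta).$$

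The next step is the Taylor expansion of $\lambda$ at $0$. Since $\widehat Q(\zeta)\mathbf{1}^{\top}=\bigl(\E_k[e^{i\langle\zeta,Z_1\rangle}]\bigr)_k$, differentiating the eigenvalue equation $\widehat Q(\zeta)u(\zeta)^{\top}=\lambda(\zeta)u(\zeta)^{\top}$ at $\zeta=0$ (with $u(0)=\mathbf{1}$) and pairing with $\pi$ yields $\nabla\lambda(0)=i\,m$, so the drift correction $e^{-i\sqrt{t}\langle\zeta,m\rangle}$ exactly cancels the first-order term. The Hessian then produces a symmetric nonnegative matrix $\Sigma$, giving
$$\lambda(\zeta) = 1 + i\langle \zeta,m\rangle - \tfrac{1}{2}\langle\zeta,\Sigma\zeta\rangle + o(\|\zeta\|^{2}).$$
Iterating $\widehat Q(\zeta)^n = \lambda(\zeta)^n\Pi(\zeta)+R(\zeta)^n$ and plugging $\zeta/\sqrt{n}$, the remainder is $O(\kappa^n)$, while the usual computation gives $e^{-in^{1/2}\langle\zeta,m\rangle}\lambda(\zeta/\sqrt n)^n\to e^{-\frac{1}{2}\langle\zeta,\Sigma\zeta\rangle}$, so the characteristic function of $n^{-1/2}Y_n$ converges to that of $\cN(0,\Sigma)$ under $\P_k$ for every $k$, hence also under the initial law $\pi$ by taking a finite convex combination.

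For $\T=[0,\infty)$, write $t=\lfloor t\rfloor+\{t\}$ and decompose $Y_t = Y_{\lfloor t\rfloor} + (Y_t-Y_{\lfloor t\rfloor})-\{t\}\,m$; the increment $Y_t-Y_{\lfloor t\rfloor}$ has, conditionally on $X_{\lfloor t\rfloor}$, the same law as $Y_{\{t\}}$ started at $X_{\lfloor t\rfloor}$, and its $\alpha=2$ moment is uniformly bounded by $M_{2}$ thanks to $\M{2}$. Hence $t^{-1/2}(Y_t-Y_{\lfloor t\rfloor})\to 0$ in $L^{2}$, and by Slutsky the continuous-time CLT reduces to the integer case just proved, applied to the embedded Markov chain $X_n$ with transition matrix $P=P_1$. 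The main technical obstacle is the smoothness of $\widehat Q(\zeta)$ and of the simple eigenvalue $\lambda(\zeta)$ at $0$ to order two under the bare moment assumption $\M{2}$; this step uses the fact that the state space $\X$ is finite, which makes the $\cC^2$ regularity of each matrix entry directly inherited from dominated convergence against the $\|y\|^{2}$-integrable measures provided by $\M{2}$, and ensures that Kato's perturbation result transfers this regularity to $\lambda(\zeta)$.
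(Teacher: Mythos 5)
Your proposal follows the Nagaev--Guivarc'h spectral method, which is precisely the machinery on which the paper rests: the decomposition $\widehat Q(\zeta)=\lambda(\zeta)\Pi(\zeta)+R(\zeta)$, the second-order expansion of the leading eigenvalue, and the integer-part reduction are exactly what appear in Lemma~\ref{lem-dec-vp} (quoted from \cite[Lem.~4.1]{HerLed13}) and in the passage from discrete to continuous time via \cite[Prop.~4.4]{FerHerLed12}. The paper itself does not prove Theorem~\ref{lem-clt} -- it attributes the result to \cite{KeiWis64}, \cite{FukHit67} and \cite{FerHerLed12} -- so there is no in-text proof to compare against line by line, but your argument is the one those references (and Section~\ref{proof-LLT} here) implement. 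Two small points to tighten. First, the identification of $\Sigma$: if you expand the eigenvalue of the \emph{uncentered} Fourier matrix as $\lambda(\zeta)=1+i\langle\zeta,m\rangle-\tfrac12\langle\zeta,M\zeta\rangle+o(\|\zeta\|^2)$, then
\[
e^{-i\sqrt n\,\langle\zeta,m\rangle}\,\lambda(\zeta/\sqrt n)^n\;\longrightarrow\;\exp\!\big(-\tfrac12\langle\zeta,(M-mm^{\top})\zeta\rangle\big),
\]
so the limiting covariance is $\Sigma=M-mm^{\top}$, i.e.\ the Hessian of $-\log\lambda$ at $0$, not the Hessian of $-\lambda$; equivalently, work directly with the centered Fourier matrix $\widehat\cY_1(\zeta)=e^{-i\langle\zeta,m\rangle}\widehat Q_1(\zeta)$, whose leading eigenvalue has vanishing gradient, as in (\ref{exp-val-pert0}). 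Second, the decomposition you write for the continuous-time step, ``$Y_t=Y_{\lfloor t\rfloor}+(Y_t-Y_{\lfloor t\rfloor})-\{t\}m$'', double-counts the drift: with $Y_t=Z_t-tm$ already centered, the correct identity is simply $Y_t=Y_{\lfloor t\rfloor}+(Y_t-Y_{\lfloor t\rfloor})$, and the increment $Y_t-Y_{\lfloor t\rfloor}=(Z_t-Z_{\lfloor t\rfloor})-\{t\}m$ is, conditionally on $X_{\lfloor t\rfloor}$, distributed as $Y_{\{t\}}$ started from $X_{\lfloor t\rfloor}$; the $\M2$ bound and the Slutsky argument then go through as you say. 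With those two corrections the argument is sound.
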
 

Now let us specify the notations and assumptions involved in our LLT. First we assume that the MAP $\{(X_t,Y_t)\}_{t\in \T}$ satisfies the following usual non-lattice condition:

\noindent \emph{\NL} : 
{\it  there is no $a\in\R^d$, no closed subgroup $H$ in $\R^d$, $H\neq \R^d$, and finally no function $\beta\, :\, \X\r\R^d$ such that: }
$$\forall k\in\X,\quad Y_1 + \beta(X_1) - \beta(k)\ \in\ a + H\ \ \ \P_{k}\text{-a.s.},$$
Second we introduce the assumptions on the density process of $t^{-1/2}Y_t$. For any $t\in\T$ and $(k,\ell)\in\X^2$, we define the  bounded positive measure $\cY_{k,\ell,t}$ on $\R^d$: 
\begin{equation} \label{loi-Yt-kl}
\forall B\in B(\R^d),\quad \cY_{k,\ell,t}(1_B) := \P_k\big\{X_t=\ell, Y_t\in B\big\}.  
\end{equation}
Let $\ell_d$ denote the Lebesgue measure on $\R^d$. From the Lebesgue decomposition of $\cY_{k,\ell,t}$ w.r.t.~$\ell_d$, there are two bounded positive measures $\cG_{k,\ell,t}$ and $\mu_{k,\ell,t}$ on $\R^d$ such that
\begin{subequations} 
\begin{eqnarray}
\forall B\in B(\R^d) &&  \cY_{k,\ell,t}(1_B):= \cG_{k,\ell,t}(1_B) + \mu_{k,\ell,t}(1_B) \label{Leb-dec-kl} \\
\text{where }  &&  \cG_{k,\ell,t}(1_B)=\int_B g_{k,\ell,t}(y)\, dy   \label{Leb-ac-kl}
\end{eqnarray}
\end{subequations}
for some measurable function $g_{k,\ell,t} : \R^d\r[0,+\infty)$, and such that $\mu_{k,\ell,t}$ and $\ell_d$ are mutually singular. The  measure $\cG_{k,\ell,t}$ is called the absolutely continuous (a.c.) part of  $\cY_{k,\ell,t}$ with associated density $g_{k,\ell,t}$. 
For any $t\in\T$, we introduce the following $N\times N$-matrices with entries in the set of bounded positive measures on $\R^d$ 
\begin{subequations}
\begin{equation} \label{def-meas-mat} 
 \cY_{t} := (\cY_{k,\ell,t})_{(k,\ell)\in\X^2},\quad  \cG_t:= (\cG_{k,\ell,t})_{(k,\ell)\in\X^2}, \quad  \cM_t := (\mu_{k,\ell,t})_{(k,\ell)\in\X^2},
\end{equation}
and for every $y\in\R^d$, the following real $N\times N$-matrix: 
\begin{equation} \label{def-Gt-density}
G_t(y) := \big(g_{k,\ell,t}(y)\big)_{(k,\ell)\in\X^2}.
\end{equation}
Then the component-wise equalities (\ref{Leb-dec-kl})-(\ref{Leb-ac-kl}) read as follows in a matrix form: for any $t\in\T$  
\begin{equation} \label{dec-leb-mat}
\forall B\in B(\R^d), \quad \cY_{t}(1_B) = \cG_{t}(1_B) + \cM_{t}(1_B) =\int_B G_t(y) dy + \cM_t(1_B). 
\end{equation}
\end{subequations}

The assumptions on the a.c.~part $\cG_t$ and the singular part $\cM_t$ of $\cY_t$ are the following ones.
\begin{ass}: \label{AS2}
There exist $c>0$ and $\rho\in(0,1)$ such that   
\begin{equation} \label{masse-id}
\forall t >0,\quad  \| \cM_{t}(1_{\R^d}) \|_{0}  \leq c\rho^t 
\end{equation}
and there exists $t_0>0$ such that 
\begin{subequations}
\begin{equation} \label{cond-c-rho}
\rho^{t_0}\max(2,cN)\leq 1/4 
\end{equation}
\begin{equation} \label{fourier-id}
\Gamma_{t_0}(\zeta) :=  \sup_{w\in[t_0,2t_0)} \| \widehat G_{w}(\zeta)\|_0 \longrightarrow 0\quad \text{when}\ \|\zeta\|\r+\infty.
\end{equation}
\end{subequations}
\end{ass}
 \begin{ass}: \label{AS0} For any $t>0$, there exists an open convex subset $\cD_t$ of $\R^d$ such that $G_t$ vanishes on $\R^d\setminus\overline{\cD}_t$, where $\overline{\cD}_t$ denotes the adherence of $\cD_t$. Moreover $G_t$ is continuous on $\overline{\cD}_t$ and differentiable on $\cD_t$, with in addition 
\begin{subequations}
\begin{gather} 
\sup_{t>0} \sup_{y\in \overline{{\cal D}}_t} \| G_t(y)\|_0 <\infty  \label{Bornes_G}\\ 
\sup_{y\in \partial {\cal D}_t}  \|G_t(y)\|_0 = O\big(t^{-(d+1)/2}\big) \quad \text{where}\ \ \partial {\cal D}_t := \overline{{\cal D}}_t\setminus \cD_t \label{Bornes_G-front} \\
 j=1,\ldots,d:\quad \sup_{t>0}\sup_{y\in {\cal D}_t} \big\| \frac{\partial G_t}{\partial y_j}(y)\big\|_0 <\infty. \label{Bornes_G-dif} 
\end{gather}
\end{subequations}
\end{ass}
The next theorem gives a  LLT for the density of the a.c.~part of the probability distribution of $t^{-1/2}Y_t$. This is the main contribution of \cite{HerLed13}.
\begin{theo} \label{main-id}
Let $\{(X_t,Y_t)\}_{t\in \T}$ be a centered MAP 
satisfying Assumptions~\emph{\IP}, $\M{3}$, \emph{\textbf{(AC\ref{AS2})-(AC\ref{AS0})}}. Moreover assume that the matrix $\Sigma$ of Theorem~\ref{lem-clt} is invertible. Then, 
for every $k\in\X$, the density $f_{k,t}(\cdot)$ of the a.c.~part   of the probability distribution of $t^{-1/2}Y_t$ under $\P_k$ satisfies the following property: 
\begin{equation} \label{result_final}
\sup_{y \in \R^d} \big|f_{k,t}(y) - \eta_{_\Sigma}(y)\big| \leq O\big( t^{-1/2}\big) + O\big(\sup_
{y\notin {\cal D}_t}\eta_{_\Sigma}(t^{-1/2}y) \big)
\end{equation}
where $\eta_{_\Sigma}(\cdot)$ denotes the density of the non-degenerate $d$-dimensional Gaussian distribution  $\cN(0,\Sigma)$ involved in Theorem~\emph{\ref{lem-clt}}. 
\end{theo}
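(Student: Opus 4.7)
The plan is to follow the classical spectral-Fourier approach to local limit theorems, adapted to the finite-state MAP setting. Let $g_{k,t}(y):=\sum_{\ell\in\X}g_{k,\ell,t}(y)$ be the density of the a.c.~part of $Y_t$ under $\P_k$, so that $f_{k,t}(y)=t^{d/2}g_{k,t}(\sqrt{t}\,y)$. Assumption \textbf{(AC\ref{AS0})} --- continuity of $G_t$ on $\overline{\cD}_t$, the boundary bound \eqref{Bornes_G-front}, and the bounded first derivatives \eqref{Bornes_G-dif} --- allows a single integration by parts in each coordinate and yields $\|\zeta_j\,\widehat G_t(\zeta)\|_\infty\leq C$, which makes the Fourier inversion formula
\begin{equation*}
g_{k,t}(y)=\frac{1}{(2\pi)^d}\int_{\R^d}e^{-i\langle \zeta,y\rangle}\,e_k\,\widehat G_t(\zeta)\,\mathbf{1}^\top\, d\zeta
\end{equation*}
rigorous, where $e_k$ denotes the $k$-th basis row-vector. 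Comparing with the analogous Fourier representation of $\eta_{_\Sigma}$ and rescaling $\zeta\mapsto\zeta/\sqrt{t}$ reduces the theorem to estimating, uniformly in $y$,
\begin{equation*}
\int_{\R^d} e^{-i\langle\zeta,y\rangle}\,\Bigl[\,e_k\,\widehat G_t(\zeta/\sqrt{t})\,\mathbf{1}^\top \;-\; e^{-\frac{1}{2}\langle\Sigma\zeta,\zeta\rangle}\Bigr]\, d\zeta.
\end{equation*}

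The heart of the argument is the spectral analysis of the Fourier matrix $\widehat\cY_t(\zeta)$, which forms a semigroup (the basic MAP property recalled in Section~\ref{proof-LLT}). Writing $\widehat G_t=\widehat\cY_t-\widehat\cM_t$, the singular part contributes at most $cN\rho^t$ in $\|\cdot\|_\infty$ by \eqref{masse-id} and \eqref{equiv-norm}, which is negligible. The integral above is then split over three regions in $\zeta$. On $\|\zeta\|\leq\delta\sqrt{t}$, standard perturbation theory under \IP~produces a dominant simple eigenvalue $\lambda(\cdot)$ of $\widehat\cY_1(\cdot)$ with $\lambda(\xi)=1-\langle\Sigma\xi,\xi\rangle/2+O(\|\xi\|^3)$ (the third moment condition $\M{3}$ upgrading the $o$ to an $O$); composing via the semigroup gives $\lambda(\zeta/\sqrt{t})^t\to e^{-\langle\Sigma\zeta,\zeta\rangle/2}$ with an Edgeworth-type remainder of order $t^{-1/2}$. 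On $\delta\sqrt{t}\leq\|\zeta\|\leq A\sqrt{t}$, \NL~together with \IP~forces the spectral radius of $\widehat\cY_1(\zeta/\sqrt{t})$ to stay uniformly strictly below $1$, so $\|\widehat\cY_t(\zeta/\sqrt{t})\|_\infty$ decays geometrically in $t$ on that compact annulus.

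The outer region $\|\zeta\|\geq A\sqrt{t}$ is handled through \eqref{fourier-id}. Writing $t=nt_0+w$ with $w\in[t_0,2t_0)$, the semigroup property, $\widehat\cY_{t_0}=\widehat G_{t_0}+\widehat\cM_{t_0}$, and the calibration \eqref{cond-c-rho} combine, after taking $\|\cdot\|_\infty$-norms, to give a bound of the form $\|\widehat G_t(\zeta)\|_\infty\leq C\bigl(\Gamma_{t_0}(\zeta)+1/2\bigr)^n$. For $\|\zeta\|$ large enough this base is strictly less than $1$ by \eqref{fourier-id}, yielding exponential decay in $t$; the integration by parts mentioned above then restores the factor $\|\zeta\|^{-1}$ per coordinate needed for integrability at infinity. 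Finally, the dichotomy in \eqref{result_final} comes from the support of $f_{k,t}$: by \textbf{(AC\ref{AS0})} the density $f_{k,t}$ vanishes outside $\cD_t/\sqrt{t}$, so for $y$ outside this set one simply has $|f_{k,t}(y)-\eta_{_\Sigma}(y)|=\eta_{_\Sigma}(y)\leq\sup_{y\notin\cD_t}\eta_{_\Sigma}(t^{-1/2}y)$, which is the second error term. I expect the outer region to be the main technical obstacle: making the semigroup reduction quantitatively compatible with the integration-by-parts device so that the pointwise Fourier decay provided by \eqref{fourier-id} is actually converted into an integrable $\zeta$-tail uniformly in $t$.
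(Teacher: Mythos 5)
Your three-region decomposition of the Fourier inversion integral is exactly the strategy underlying the paper's proof: the paper's Lemma~\ref{lem-dec-vp} is your inner ball $\|\zeta\|\leq\delta\sqrt{t}$ (eigenvalue perturbation, with the simplification $\Sigma=I$ adopted at the start of Section~\ref{proof-LLT}), Lemma~\ref{lem-non-ari} is your annulus, Lemma~\ref{lem-hat-g} is your outer region, and the second error term in \eqref{result_final} arises, as you say, because $f_{k,t}$ vanishes off $t^{-1/2}\cD_t$ while $\eta_{_\Sigma}$ does not. Two issues deserve attention.

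First, a small but real discrepancy: you invoke \emph{\NL} to control the annulus, but \emph{\NL} is not among the hypotheses of Theorem~\ref{main-id}. The paper's Lemma~\ref{lem-non-ari} is stated under \IP, \textbf{(AC\ref{AS2})} and $\M{\alpha}$, and in fact \textbf{(AC\ref{AS2})} \emph{implies} a non-lattice conclusion (this is exactly how the proof of Lemma~\ref{lem-NL-unif} concludes: the Riemann--Lebesgue type decay in \eqref{fourier-id} rules out the lattice alternative for $\widehat\cY_1(\zeta)$ having a peripheral eigenvalue). So the annulus estimate should be derived from \textbf{(AC\ref{AS2})}, not from an extra hypothesis.

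Second, and more seriously, the outer region argument is not yet a proof, and you have correctly sensed where the difficulty lies. The bound you propose, $\|\widehat G_t(\zeta)\|_\infty\leq C\bigl(\Gamma_{t_0}(\zeta)+1/2\bigr)^n$, is uniform in $\zeta$ for $\|\zeta\|$ large; it gives geometric decay in $t=nt_0+w$ but no decay in $\zeta$ at all (the base tends to $1/2$, not to $0$). This is also the shape of the paper's Lemma~\ref{lem-hat-g}: $|\widehat g_{k,\ell,t}(\zeta)|\leq C\,t/2^{t/t_0}$ uniformly over $\|\zeta\|\geq A$. Such a bound does not make $\int_{\|\zeta\|\geq A\sqrt{t}}$ converge. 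Your remedy, ``the integration by parts \ldots restores the factor $\|\zeta\|^{-1}$ per coordinate,'' does not close the gap either: \textbf{(AC\ref{AS0})} only controls first partial derivatives of $G_t$, so you get a single integration by parts and $O(1/|\zeta_j|)$, which is not integrable over $\R^d$ for any $d\geq 1$, and you cannot iterate without bounding higher-order derivatives of $G_t$, which the assumptions do not provide. What is actually needed is to extract from the semigroup factorisation $\widehat\cY_t=\widehat\cY_{t_0}^n\widehat\cY_w$ a fixed, $t$-independent number of factors $\widehat G_{t_0}(\zeta)$ contributing genuine $\zeta$-decay (through $\Gamma_{t_0}(\zeta)\r0$) so that their product is $\zeta$-integrable, while the remaining factors deliver the $O(\tau^n)$ decay in $t$; and one must keep the singular contributions $\widehat\cM_{t_0}$ under control in the binomial expansion of $(\widehat G_{t_0}+\widehat\cM_{t_0})^n$, which is exactly what the calibration \eqref{cond-c-rho} is for. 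As written, your sketch asserts integrability rather than establishing it.
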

%

\section{Semigroup of Fourier matrices and basic lemmas} \label{proof-LLT}
We assume that the conditions of Theorem~\ref{main-id} hold, and for the sake of simplicity that $\Sigma$ is the identity matrix. The proof of Theorem~\ref{main-id} involves Fourier analysis as in the i.d.d.~case. In our Markov context, this study is based on the semi-group property of the matrix family $\{\widehat\cY_t(\zeta)\}_{t\in\T}$ for every $\zeta\in\R^d$ which allows to analyze the characteristic function of $Y_t$. In this section,  we provide a collection of lemmas which highlights the connections between the assumptions of Section~\ref{sec-ass-stat} and the behavior of this semi-group.
This is the basic material for the derivation of Theorem~\ref{main-id} in \cite[Section~3]{HerLed13}.

Recall that, for every $(k,\ell)\in\X^2$, the measures $\cG_{k,\ell,t}$ (with density $g_{k,\ell,t}$) and $\mu_{k,\ell,t}$ denote the a.c.~and singular parts of the bounded positive measure $\cY_{k,\ell,t}(1_{\cdot})=\P_k\{X_t=\ell,Y_t\in \cdot\}$ on $\R^d$. 
Using the notation of (\ref{Leb-dec-kl})-(\ref{Leb-ac-kl}), the bounded positive measure $\sum_{\ell=1}^N \cG_{k,\ell,t}$ with density 
$$g_{k,t}:=\sum_{\ell=1}^N g_{k,\ell,t}$$
is the a.c.~part of the probability distribution of $Y_t$ under $\P_k$, while $\mu_{k,t}:=\sum_{\ell=1}^N\mu_{k,\ell,t}$ is its singular part. 
That is, we have for any $k\in\X$ and $t>0$: 
\begin{equation} \label{loi-Yt}
\forall B\in B(\R^d),\quad \P_k\{Y_t\in B\} = \int_B g_{k,t}(y)\, dy + \mu_{k,t}(1_B).
\end{equation}

 The bounded positive measure  $\cY_t$ is defined in (\ref{def-meas-mat}) and its Fourier transform $\widehat\cY_t$ is (see (\ref{def-Four-mat})): 
\begin{equation} \label{Fourier}
\forall (t,\zeta)\in\T\times\R^d,\ \forall (k,\ell)\in \X^2,\quad \big(\widehat\cY_t(\zeta)\big)_{k,\ell} = \E_{k}\big[1_{\{X_t=\ell\}}\, e^{i \langle \zeta , Y_t \rangle}\big]. 
\end{equation}
Note that $\widehat\cY_t(0) = P_t$. From the additivity of the second component $Y_t$, we know that that $\{\widehat\cY_t(\zeta)\}_{t\in\T,\zeta\in\R^d}$ is a semi-group of matrices (e.g. see \cite{FerHerLed12} for details), that is using the usual product in the space $\cM_N(\C)$ of complex $N\times N$-matrices:
\begin{equation} \label{semi-group}
\forall \zeta\in\R^d,\ \forall(s,t)\in\T^2,\quad \widehat\cY_{t+s}(\zeta) = \widehat\cY_t(\zeta)\, \widehat\cY_s(\zeta). \tag{SG}
\end{equation}
In particular the following property holds true
\begin{equation} \label{semi-gpe-discret}
\forall \zeta\in\R^d,\ \forall n\in\N,\qquad \widehat\cY_n(\zeta) := \big(\E_{k}[1_{\{X_n=\ell\}}\, e^{i \langle \zeta , Y_n \rangle}]\big)_{(k,\ell)\in\X^2} = \widehat\cY_1(\zeta)^n. 
\end{equation}
For every $k\in\X$ and $t\in\T$, we denote by $\phi_{k,t}$ the characteristic function of $Y_t$ under $\P_k$: 
\begin{equation} \label{car-fct}
\forall \zeta\in\R^d,\quad \phi_{k,t}(\zeta) := \E_{k}\big[e^{i \langle \zeta , Y_t \rangle}\, \big] = e_k \widehat\cY_t(\zeta) \mathbf{1}^{\top},
\end{equation}
where $e_k$ is the $k$-th vector of the canonical basis of $\R^d$. 

The first lemma below provides the control of $\phi_{k,t}$ on a ball $B(0,\delta):= \{\zeta\in\R^d : \|\zeta\| < \delta\}$ for some $\delta>0$, using a perturbation approach. The second one is on the control of $\phi_{k,t}$ on the annulus $\{\zeta\in\R^d : \delta\leq\|\zeta\|\leq A]$ for any $A>\delta$. Finally the third lemma focuses on the Fourier transform of the density $g_{k,\ell,t}$ of the a.c. part $\cG_{k,\ell,t}$ of $\cY_{k,\ell,t}$ in (\ref{Leb-dec-kl}) on the domain $\{\zeta\in\R^d :\|\zeta\|\geq A\}$ for some $A>0$.  
\begin{lem}[\protect{\cite[Lem.~4.1]{HerLed13}}] \label{lem-dec-vp}
Under Assumptions~\emph{\IP}~and $\M{3}$, there exists a real number $\delta>0$ 
such that, for all $\zeta\in B(0,\delta)$, the characteristic function of $Y_t$ satisfies 
\begin{equation} \label{exp-val-pert}
\forall k\in\X,\ \forall t\in\T,\quad \phi_{k,t}(\zeta)  = \lambda(\zeta)^{\lfloor t \rfloor}\, L_{k,t}(\zeta)  + R_{k,t}(\zeta),
\end{equation}
with $\C$-valued functions $\lambda(\cdot)$, $L_{k,t}(\cdot)$ and $R_{k,t}(\cdot)$ on $B(0,\delta)$ satisfying the next properties for $t\in\T$, $\zeta\in\R^d$ and $k\in\X$: 
\begin{subequations}
\begin{eqnarray}
& &  \|\zeta\| < \delta\ \Rightarrow\ \lambda(\zeta) = 1 - \frac{\|\zeta\|^2}{2} + O(\|\zeta\|^3) \label{exp-val-pert0} \\
& & t\geq 2,\ \|t^{-1/2}\zeta\| < \delta\ \Rightarrow\  \big|\lambda(t^{-1/2}\zeta)^{\lfloor t \rfloor} - e^{-\|\zeta\|^2/2}\big| \leq C\, \frac{(1+\|\zeta\|^3)e^{-\|\zeta\|^2/8}}{\sqrt t} \label{exp-val-pert0-bis} \\
& & \|\zeta\| < \delta\ \Rightarrow\ \big|L_{k,t}(\zeta)-1\big| \leq C\, \|\zeta\|  \label{exp-val-pert1} \\
& & \exists\, r\in(0,1),\quad \sup_{\|\zeta\|\leq \delta}\, |R_{k,t}(\zeta)| \leq C\, r^{\lfloor t \rfloor}  \label{exp-val-pert2}
\end{eqnarray}
\end{subequations}
where the constant $C>0$ in \emph{(\ref{exp-val-pert0-bis})}-\emph{(\ref{exp-val-pert2})} only depends on $\delta$ and on $M_3$ in $\M{3}$ (see \emph{(\ref{moment-alpha})}). 
\end{lem}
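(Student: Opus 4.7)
The plan is to apply analytic perturbation theory to $\widehat{\cY}_1(\zeta)$ near $\zeta = 0$, where it reduces to the irreducible aperiodic stochastic matrix $P$, and then to promote the resulting spectral decomposition to general $t \in \T$ via the semigroup identity (\ref{semi-group}).

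\textbf{Spectral decomposition.} At $\zeta = 0$ the matrix $\widehat{\cY}_1(0) = P$ is irreducible and aperiodic by \IP, so Perron--Frobenius gives a simple dominant eigenvalue $1$ with rank-one spectral projection $\Pi_0 = \mathbf{1}^\top \pi$, the remaining spectrum lying in a closed disk of radius $r_0 < 1$. Since $\M{3}$ makes $\zeta \mapsto \widehat{\cY}_1(\zeta)$ of class $C^3$ with derivatives of order $\leq 3$ controlled by $M_3$, standard Kato perturbation theory furnishes $\delta > 0$ and $r \in (r_0, 1)$ such that, for $\zeta \in B(0,\delta)$,
\[
\widehat{\cY}_1(\zeta) = \lambda(\zeta)\, \Pi(\zeta) + N(\zeta), \qquad \Pi(\zeta)^2 = \Pi(\zeta),\quad \Pi(\zeta) N(\zeta) = N(\zeta) \Pi(\zeta) = 0,
\]
with $\lambda, \Pi, N$ of class $C^3$, $\lambda(0) = 1$, $\Pi(0) = \Pi_0$, and $\|N(\zeta)^n\|_\infty \leq C\, r^n$ uniformly in $n \in \N$ and $\zeta \in B(0,\delta)$. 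Iterating yields $\widehat{\cY}_1(\zeta)^n = \lambda(\zeta)^n\, \Pi(\zeta) + N(\zeta)^n$.

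\textbf{From $\widehat{\cY}_1$ to $\phi_{k,t}$.} Writing $t = \lfloor t \rfloor + \{t\}$ with $\{t\} \in [0,1) \cap \T$, the semigroup property (\ref{semi-group}) gives $\widehat{\cY}_t(\zeta) = \widehat{\cY}_1(\zeta)^{\lfloor t \rfloor}\, \widehat{\cY}_{\{t\}}(\zeta)$, so (\ref{car-fct}) yields (\ref{exp-val-pert}) with
\[
L_{k,t}(\zeta) := e_k\, \Pi(\zeta)\, \widehat{\cY}_{\{t\}}(\zeta)\, \mathbf{1}^\top, \qquad R_{k,t}(\zeta) := e_k\, N(\zeta)^{\lfloor t \rfloor}\, \widehat{\cY}_{\{t\}}(\zeta)\, \mathbf{1}^\top.
\]
Standard first- and second-order perturbation formulas together with the centering $\E_\pi[Y_1] = 0$ yield $\lambda'(0) = 0$ and $\lambda''(0) = -\Sigma = -I$ (the CLT covariance of Theorem~\ref{lem-clt}), whence (\ref{exp-val-pert0}) by Taylor's theorem. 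Estimate (\ref{exp-val-pert2}) is immediate from $\|N(\zeta)^{\lfloor t \rfloor}\|_\infty \leq C\, r^{\lfloor t \rfloor}$ combined with $\|\widehat{\cY}_{\{t\}}(\zeta)\|_\infty \leq 1$. For (\ref{exp-val-pert1}), the identities $\Pi(0)\mathbf{1}^\top = \mathbf{1}^\top$ and $\widehat{\cY}_{\{t\}}(0)\mathbf{1}^\top = \mathbf{1}^\top$ give $L_{k,t}(0) = 1$, and the Lipschitz bound follows from the $C^1$-regularity of $\Pi$ combined with $\|\widehat{\cY}_v(\zeta) - \widehat{\cY}_v(0)\|_0 \leq \|\zeta\|\, M_1$ (via $|e^{ix}-1| \leq |x|$ and $M_1 \leq M_3^{1/3}$), uniformly in $v \in (0,1]\cap \T$.

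\textbf{Main obstacle: estimate (\ref{exp-val-pert0-bis}).} Starting from $\log \lambda(\zeta) = -\|\zeta\|^2/2 + O(\|\zeta\|^3)$ on $B(0,\delta)$, one computes
\[
\lfloor t \rfloor\, \log \lambda(t^{-1/2}\zeta) + \tfrac{1}{2}\|\zeta\|^2 = -\{t\}\, \tfrac{\|\zeta\|^2}{2t} + \lfloor t \rfloor\, O\!\big(\|\zeta\|^3/t^{3/2}\big) = O\!\big((1 + \|\zeta\|^3)/\sqrt{t}\big),
\]
where the inequality $\|\zeta\|^2/t \leq \delta\, \|\zeta\|/\sqrt{t}$ on $\{\|t^{-1/2}\zeta\| \leq \delta\}$ controls the first term. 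Shrinking $\delta$ so that $\Re \log \lambda(\zeta) \leq -\|\zeta\|^2/4$ throughout $B(0,\delta)$ gives $\Re [\lfloor t \rfloor\, \log \lambda(t^{-1/2}\zeta)] \leq -\|\zeta\|^2/8$ for $t \geq 2$, and the elementary bound $|e^a - e^b| \leq |a-b|\, \max(e^{\Re a}, e^{\Re b})$ closes the proof of (\ref{exp-val-pert0-bis}). The delicacy is that the domain $\|\zeta\| \leq \delta\sqrt{t}$ grows with $t$, so the cubic Taylor remainder must be balanced against the exponential decay of $|\lambda(t^{-1/2}\zeta)^{\lfloor t \rfloor}|$, which is precisely what pins down the admissible size of $\delta$.
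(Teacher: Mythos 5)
Your proposal is correct and takes essentially the same approach as the paper. The only part of the lemma the paper actually proves here (the rest is deferred to \cite[Lem.~4.1]{HerLed13}) is (\ref{exp-val-pert0-bis}), and there you differ in a minor technical detail: the paper factors $\lambda(t^{-1/2}\zeta)^n - e^{-\|\zeta\|^2/2}$ via the telescoping identity $a^n - b^n = (a-b)\sum_{k=0}^{n-1}a^{n-k-1}b^k$ and bounds the two factors separately (the geometric sum against $e^{-\|\zeta\|^2/8}$, the difference $a-b$ against $t^{-3/2}\|\zeta\|^3 + t^{-2}\|\zeta\|^2$), whereas you pass to $\log\lambda$ and use the mean-value bound $|e^a - e^b|\leq |a-b|\max(e^{\Re a},e^{\Re b})$. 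Both routes hinge on the same two ingredients — the Taylor expansion $\lambda(\zeta)=1-\|\zeta\|^2/2+O(\|\zeta\|^3)$ and the bound $|\lambda(v)|\leq e^{-\|v\|^2/4}$ after shrinking $\delta$ — and both deliver the same $O\big((1+\|\zeta\|^3)e^{-\|\zeta\|^2/8}/\sqrt{t}\big)$ rate; the logarithm version is a touch slicker but requires a word about the principal branch being well defined near $\lambda(0)=1$, which the telescoping version avoids. (Your expression $-\{t\}\|\zeta\|^2/(2t)$ should carry a plus sign, but this is immaterial since only the modulus is used.)
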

\begin{proof}{ of \emph{(\ref{exp-val-pert0-bis})}}
From (\ref{exp-val-pert0}) we obtain for $v\in\R^d$ such that $|v|\leq \delta$ (up to reduce $\delta$)
$$|\lambda(v)| \leq 1 - \frac{\|v\|^2}{2} + \frac{\|v\|^2}{4}  \leq e^{- \|v\|^2/4}.$$
Therefore we have for any $t\geq 2$ and any $\zeta\in\R^d$ such that $t^{-1/2}\|\zeta\|\leq \delta$,
\begin{equation} \label{Z}
\big|\lambda\big(t^{-1/2}\zeta\big)\big| 
\leq e^{-\frac{\|\zeta\|^2}{4t}}. 
\end{equation}
Now consider any $t\geq 2$ and any $\zeta\in\R^d$ such that $t^{-1/2}\|\zeta\|\leq \delta$. Set $n:=\lfloor t \rfloor$, and write
$$\lambda\big(t^{-1/2}\zeta\big)^n - e^{-\frac{\|\zeta\|^2}{2}} =
\big(\lambda\big(t^{-1/2}\zeta\big) -
e^{-\frac{\|\zeta\|^2}{2n}}\big) \, \sum_{k=0}^{n-1}
\lambda\big(t^{-1/2}\zeta\big)^{n-k-1}
e^{\frac{-k\|\zeta\|^2}{2n}}.$$
We have 
\begin{eqnarray*}
\big|\sum_{k=0}^{n-1}
\lambda\big(t^{-1/2}\zeta\big)^{n-k-1}
e^{\frac{-k\|\zeta\|^2}{2n}}\big| &\leq& \sum_{k=0}^{n-1}
\big|\lambda\big(t^{-1/2}\zeta\big)\big|^{n-k-1} 
e^{\frac{-k\|\zeta\|^2}{2n}} \\
&\leq& \sum_{k=0}^{n-1}
e^{-\frac{\|\zeta\|^2(n-k-1)}{4t}} 
e^{\frac{-k\|\zeta\|^2}{4t}} \\
&\leq& n\, e^{-\frac{n\|\zeta\|^2}{4t}} 
e^{\frac{\|\zeta\|^2}{4t}} \leq  b\, t\, e^{-\frac{\|\zeta\|^2}{8}} 
\end{eqnarray*}
where $b := \sup_{|u|\leq \delta}\exp(u^2/4)$, since $n/t \geq (t-1)/t \geq 1/2$ for $t\geq2$. Moreover, using (\ref{exp-val-pert0}) and the fact that $|e^a-e^b| \leq |a-b|$ for any $a,b\in(-\infty,0)$, we obtain that there exist positive constants $D$ and $D'$ such that 
\begin{eqnarray*}
\big|\lambda\big(t^{-1/2}\zeta\big) -
e^{-\frac{\|\zeta\|^2}{2n}}\big| &\leq& \big|\lambda\big(t^{-1/2}\zeta\big) -
e^{-\frac{\|\zeta\|^2}{2t}}\big| + \big|e^{-\frac{\|\zeta\|^2}{2t}} -
e^{-\frac{\|\zeta\|^2}{2n}}\big| \\ 
&\leq& D\, t^{-\frac{3}{2}}\|\zeta\|^3 + 
\frac{\|\zeta\|^2}{2}\big|\frac{1}{t} - \frac{1}{n}\big| \\
&\leq& D\, t^{-\frac{3}{2}}\|\zeta\|^3 + 
\frac{\|\zeta\|^2}{2}\, \frac{1}{t(t-1)} 
\\
&\leq& D'\big(t^{-\frac{3}{2}}\|\zeta\|^3 + t^{-2}\|\zeta\|^2\big).  
\end{eqnarray*}
Thus 
\begin{eqnarray*}
\bigg|\lambda\big(t^{-1/2}\zeta\big)^n - e^{-\frac{\|\zeta\|^2}{2}}\bigg| &\leq& 
bD'\, t\, e^{-\frac{\|\zeta\|^2}{8}} \big(t^{-\frac{3}{2}}\|\zeta\|^3 + 
 t^{-2} \|\zeta\|^2\big) \\ 
&\leq& bD'\, e^{-\frac{\|\zeta\|^2}{8}} \bigg(\frac{\|\zeta\|^3}{\sqrt t} + 
 \frac{\|\zeta\|^2}{t}\bigg) \\
 &\leq& \frac{bD'}{\sqrt t}\, e^{-\frac{\|\zeta\|^2}{8}} \big(\|\zeta\|^3 + 
 \|\zeta\|^2\big)\\
 &\leq& \frac{D''}{\sqrt t}\,\, e^{-\frac{\|\zeta\|^2}{8}} \big(1+\|\zeta\|^3\big)
\end{eqnarray*}
for some positive constant $D''$. 
\end{proof}
\begin{lem}[\protect{\cite[Lem.~4.2]{HerLed13}}] \label{lem-non-ari} 
Assume that Conditions~\emph{\IP}, \emph{\textbf{(AC1)}} and $\M{\alpha}$ for some $\alpha>0$ hold. Let $\delta,A$ be any real numbers such that $0<\delta<A$. There exist constants $D\equiv D(\delta,A)>0$ and $\tau\equiv \tau(\delta,A)\in(0,1)$ such that 
\begin{equation} \label{ineg-non-ari}
\forall k\in\X,\ \forall t\in\T,\quad \sup_{\delta\leq\|\zeta\|\leq A} |\phi_{k,t}(\zeta)| \leq D\, \tau^{\lfloor t \rfloor}.
\end{equation}
\end{lem}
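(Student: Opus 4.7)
The approach is to use the semigroup property to reduce to a uniform estimate for powers of $\widehat{\cY}_1(\zeta)$ on the compact annulus $K:=\{\zeta\in\R^d:\delta\leq\|\zeta\|\leq A\}$, then to prove $r(\widehat{\cY}_1(\zeta))<1$ for every $\zeta\neq 0$ through a Perron--Frobenius type argument that collides with the absolute continuity contained in \textbf{(AC1)}, and finally to upgrade this pointwise spectral bound into a uniform exponential decay via continuity and compactness.

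For the reduction, write $t=\lfloor t\rfloor+s$ with $s\in[0,1)$, so (\ref{semi-group})--(\ref{semi-gpe-discret}) give $\widehat{\cY}_t(\zeta)=\widehat{\cY}_1(\zeta)^{\lfloor t\rfloor}\widehat{\cY}_s(\zeta)$. Since $|(\widehat{\cY}_u(\zeta))_{k,\ell}|\leq\P_k\{X_u=\ell\}$, the row-sum formula for the operator norm gives $\|\widehat{\cY}_u(\zeta)\|_\infty\leq 1$ uniformly in $u\in\T$ and $\zeta\in\R^d$; together with $|\phi_{k,t}(\zeta)|\leq\|\widehat{\cY}_t(\zeta)\|_\infty$ and submultiplicativity of $\|\cdot\|_\infty$, the lemma reduces to producing $D>0$ and $\tau\in(0,1)$ with $\|\widehat{\cY}_1(\zeta)^n\|_\infty\leq D\tau^n$ uniformly over $\zeta\in K$ and $n\in\N$.

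The core of the proof is $r(\widehat{\cY}_1(\zeta))<1$ for every $\zeta\neq 0$. Assume for contradiction that $\widehat{\cY}_1(\zeta)v^\top=\mu v^\top$ with $\|v\|_\infty=1$ and $|\mu|\geq 1$, i.e.\ $\mu v_k=\E_k[e^{i\langle\zeta,Y_1\rangle}v_{X_1}]$. Applying the triangle inequality at any $k^*$ with $|v_{k^*}|=1$ forces $|\mu|=1$ and $|v_{X_1}|=1$ $\P_{k^*}$-a.s.; irreducibility of $P$ in \IP{} propagates this to $|v_\ell|=1$ for every $\ell$, so $v_\ell=e^{i\beta(\ell)}$ for some $\beta:\X\to\R$. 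The equality case of the triangle inequality, now available at every $k$, yields
\[
\langle\zeta,Y_1\rangle+\beta(X_1)-\beta(k)\equiv\gamma\pmod{2\pi}\quad\P_k\text{-a.s.},\qquad \gamma:=\arg\mu,
\]
and iterating through the Markov additive structure produces the analogous congruence for $(Y_n,X_n)$ with $n\gamma$ in place of $\gamma$. Hence the law of $\langle\zeta,Y_n\rangle$ under $\P_k$ is supported on the countable set $\{n\gamma+\beta(k)-\beta(\ell)+2\pi\Z:\ell\in\X\}$, and in particular is singular with respect to Lebesgue measure on $\R$. However, \textbf{(AC1)} gives $\|\cM_n(1_{\R^d})\|_0\leq c\rho^n$, while \IP{} yields $P^n_{k,\ell}\to\pi(\ell)>0$, so for $n$ large the a.c.\ matrix $\cG_n$ carries strictly positive Lebesgue mass on $\R^d$; its image under the nonzero linear form $y\mapsto\langle\zeta,y\rangle$ is absolutely continuous on $\R$, contradicting the preceding singularity.

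The final step is routine: Gelfand's formula supplies, for each $\zeta\in K$, an integer $n(\zeta)$ with $\|\widehat{\cY}_1(\zeta)^{n(\zeta)}\|_\infty\leq 1/2$; continuity of $\zeta\mapsto\widehat{\cY}_1(\zeta)^{n(\zeta)}$ extends this to a neighborhood with bound $3/4$, and a finite subcover of $K$ produces a uniform $n_*$ and $\tau_1:=3/4<1$ with $\|\widehat{\cY}_1(\zeta)^{n_*}\|_\infty\leq\tau_1$ on $K$ (padding shorter exponents using $\|\widehat{\cY}_1(\zeta)\|_\infty\leq 1$). Submultiplicativity on $n=qn_*+r$ then gives $\|\widehat{\cY}_1(\zeta)^n\|_\infty\leq\tau_1^{-1}\tau^n$ with $\tau:=\tau_1^{1/n_*}\in(0,1)$, which completes the proof. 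The main obstacle I anticipate is the spectral step: isolating the equality case of the triangle inequality cleanly and iterating the resulting lattice identity on $\langle\zeta,Y_1\rangle$ through the Markov additive structure so that it collides unambiguously with the absolute continuity content of \textbf{(AC1)}.
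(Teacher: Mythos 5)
Your proof is correct and matches the strategy the paper uses; note, however, that this companion paper does not reproduce the proof of Lemma~\ref{lem-non-ari} itself (it cites \cite[Lem.~4.2]{HerLed13}), so the closest internal comparison is the proof of the uniform version, Lemma~\ref{lem-NL-unif}. That proof follows the same outline as yours: reduce to a spectral radius bound $r(\widehat{\cY_1}(\zeta))<1$ on the compact annulus, and show $r=1$ would force $|v|=\mathbf{1}$ and the a.s.\ identity $\langle\zeta,Y_1\rangle+\beta(X_1)-\beta(k)\in\gamma+2\pi\Z$ via the equality case of the triangle inequality and positivity of $\pi$. The differences are minor and instructive. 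First, to convert $r<1$ on a compact set into a uniform geometric bound on the powers, the paper uses a Dunford integral $\frac{1}{2i\pi}\oint_{\Gamma_\tau}z^n(zI-\widehat{\cY_1}(\zeta))^{-1}dz$ with a uniformly bounded resolvent, while you use Gelfand's formula plus a finite subcover and the submultiplicativity trick $n=qn_*+r$; both are valid, yours is slightly more elementary. Second, for the final contradiction the paper simply asserts that the lattice identity is incompatible with \textbf{(AC1)} and refers back to the original reference, whereas you actually supply the argument: iterate the a.s.\ identity to $\langle\zeta,Y_n\rangle+\beta(X_n)-\beta(k)\equiv n\gamma\pmod{2\pi}$ (licit once the equality case is available at every $k$, which your irreducibility step delivers), observe that this makes the law of $\langle\zeta,Y_n\rangle$ under $\P_k$ purely atomic, then use $\|\cM_n(1_{\R^d})\|_0\leq c\rho^n$ together with the fact that the pushforward of an absolutely continuous measure on $\R^d$ under a nonzero linear form is absolutely continuous on $\R$ to force a strictly positive a.c.\ component for large $n$. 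This step --- which you flagged as your main concern --- is sound, and you correctly identified that one must pass to large $n$ because \textbf{(AC1)} gives no control on $\cM_1$ when $c\rho\geq 1$. The only place where the writing is compressed is the claim that irreducibility ``propagates'' $|v_\ell|=1$ to all $\ell$: as written it only reaches states in one step from $k^*$, and either an induction along paths or the $\pi$-averaging argument used in Lemma~\ref{lem-NL-unif} (from $|v|\leq P|v|$ and $\pi(P|v|)=\pi(|v|)$ with $\pi>0$ conclude $P|v|=|v|$, hence $|v|$ constant by Perron--Frobenius) should be spelled out.
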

\begin{lem}[\protect{\cite[Th.~4.3]{HerLed13}}] \label{lem-hat-g}
Under Condition~\emph{\textbf{(AC\ref{AS2})}}, there exist positive constants $A$ and $C$ such that the following property holds:  
$$|\zeta|\geq A\ \Longrightarrow\ \forall (k,\ell)\in\X^2,\ \forall t\in [t_0,+\infty[, 
\quad \big|\widehat g_{k,\ell,t}(\zeta)\big| \leq C\, \frac{t}{2^{t/t_0}}.$$
\end{lem}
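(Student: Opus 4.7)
The approach is a direct application of the semi-group identity (\ref{semi-group}) combined with the observation that for $\|\zeta\|$ large, $\|\widehat\cY_{t_0}(\zeta)\|_0$ lies strictly below $1/(2N)$. This latter fact is the joint consequence of the uniform control $\|\widehat\cM_s(\zeta)\|_0 \leq c\rho^{t_0}$ coming from (\ref{masse-id})-(\ref{cond-c-rho}), and the asymptotic smallness $\Gamma_{t_0}(\zeta)\to 0$ provided by (\ref{fourier-id}).

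Concretely, fix $t\geq t_0$ and write $t = n t_0 + w$ with $n := \lfloor t/t_0\rfloor - 1 \geq 0$ and $w := t - n t_0 \in [t_0, 2t_0)$. By (\ref{semi-group}),
$$\widehat\cY_t(\zeta) \, = \, \widehat\cY_{t_0}(\zeta)^n\,\widehat\cY_w(\zeta).$$
Note that (\ref{cond-c-rho}) yields both $\rho^{t_0}\leq 1/8$ and $c\rho^{t_0}\leq 1/(4N)$, so $\|\widehat\cM_s(\zeta)\|_0\leq c\rho^s\leq c\rho^{t_0}\leq 1/(4N)$ for all $\zeta\in\R^d$ and $s\in[t_0,2t_0)$. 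I would then use (\ref{fourier-id}) to choose $A>0$ such that $\Gamma_{t_0}(\zeta)\leq 1/(4N)$ whenever $\|\zeta\|\geq A$; this gives $\|\widehat G_s(\zeta)\|_0\leq 1/(4N)$ for all $s\in[t_0,2t_0)$ in the same range. Summing the two pieces of the Lebesgue decomposition $\widehat\cY_s = \widehat G_s + \widehat\cM_s$ yields
$$\|\widehat\cY_{t_0}(\zeta)\|_0 \, \leq \, \tfrac{1}{2N},\qquad \|\widehat\cY_w(\zeta)\|_0 \, \leq \, \tfrac{1}{2N}\qquad(\|\zeta\|\geq A).$$

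Iterating the submultiplicativity $\|AB\|_0\leq N\|A\|_0\|B\|_0$ that follows from (\ref{equiv-norm}) then gives
$$\|\widehat\cY_t(\zeta)\|_0 \, \leq \, N^n\,\|\widehat\cY_{t_0}(\zeta)\|_0^n\,\|\widehat\cY_w(\zeta)\|_0 \, \leq \, \frac{N^n}{(2N)^{n+1}} \, = \, \frac{1}{N\,2^{n+1}}.$$
Since $\widehat G_t(\zeta) = \widehat\cY_t(\zeta) - \widehat\cM_t(\zeta)$ with $\|\widehat\cM_t(\zeta)\|_0\leq c\rho^t\leq c\,2^{-3t/t_0}$ (again using $\rho^{t_0}\leq 1/8$), and since $2^{n+1}\geq 2^{t/t_0-1}$, both contributions to $\|\widehat G_t(\zeta)\|_0$ are dominated by a constant multiple of $2^{-t/t_0}$. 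We conclude $\|\widehat G_t(\zeta)\|_0\leq C\,2^{-t/t_0}\leq C\,t\,2^{-t/t_0}$, which is the claimed entry-wise estimate on $\widehat g_{k,\ell,t}(\zeta)$.

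The only delicate point is the choice of $A$: it is precisely the conjunction of the uniform bound $c\rho^{t_0}\leq 1/(4N)$ from (\ref{cond-c-rho}) with the vanishing $\Gamma_{t_0}(\zeta)\to 0$ from (\ref{fourier-id}) that pushes $\|\widehat\cY_{t_0}(\zeta)\|_0$ below the threshold $1/(2N)$ required to trigger geometric decay under the semi-group. Once this smallness is secured, everything else is routine iteration. Observe that the argument in fact yields the sharper bound $C/2^{t/t_0}$; the factor $t$ in the stated inequality is free looseness, likely retained for notational convenience in the sequel.
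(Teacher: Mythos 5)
Your proof is correct, and it is the natural argument for this kind of estimate: decompose $t = n t_0 + w$ with $w \in [t_0, 2t_0)$ (deliberately avoiding small $w$ where the singular mass is out of control), bound $\|\widehat\cY_s(\zeta)\|_0$ for $s$ in that window by $1/(2N)$ using $c\rho^{t_0}\le 1/(4N)$ from (\ref{cond-c-rho}) together with the $A$-threshold from (\ref{fourier-id}), and then pump the semi-group (\ref{semi-group}) plus the submultiplicativity $\|AB\|_0\le N\|A\|_0\|B\|_0$ to get geometric decay, finally subtracting the singular Fourier term which is exponentially small. The present paper does not reproduce the proof of this lemma --- it cites it directly as \cite[Th.~4.3]{HerLed13} --- so there is no internal proof to compare against, but your route is the standard one and almost certainly the one used there. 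Two small remarks: your observation that the argument actually yields $C\,2^{-t/t_0}$ without the factor $t$ is accurate (passing to $C\,t\,2^{-t/t_0}$ only costs replacing $C$ by $C/t_0$ when $t\ge t_0$); and it is slightly cleaner to first pass to $\|\cdot\|_\infty$ via (\ref{equiv-norm}), which gives $\|\widehat\cY_s(\zeta)\|_\infty\le 1/2$ on $[t_0,2t_0)$ and then $\|\widehat\cY_t(\zeta)\|_0\le\|\widehat\cY_t(\zeta)\|_\infty\le 2^{-(n+1)}$ by ordinary operator-norm submultiplicativity, avoiding the bookkeeping with powers of $N$.
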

%

\section{Application to the Markov Renewal Processes} \label{MRP}

Let $\{X_n,Y_n\}_{n\in\N}$ be a discrete-time  MAP with state space $\X\times \R $ and $\X:=\{1,\ldots,N\}$. When $\{\xi_n\}_{n\ge 0}$, with $\xi_0:=Y_0$ and   $\xi_n:=Y_n-Y_{n-1}$ for $n\ge 1$,   is a sequence of non-negative random variables, then $\{X_n,Y_n\}_{n\in\N}$ is also known as a Markov Renewal Process (MRP). The so-called semi-Markov kernel $Q(\cdot;\{\cdot\}\times dy)$  is defined by  (e.g. see \cite[VII.4]{Asm03})
	\begin{equation}\label{semi_Markov} \forall B \in B(\R), \ \forall (k,\ell)\in \X^2, \quad  \P_k\{X_{1}=\ell, Y_{1} \in B\} = \int_B  Q(k;\{\ell\}\times dy)
\end{equation}
and is nothing else but the measure $\cY_{k,\ell,1}$ in (\ref{loi-Yt-kl}). 
The transition probability matrix $P$ associated with the Markov chain $\{X_n\}_{n\in\N}$ is given by 
	\[\forall (k,\ell)\in \X^2, \quad P(k,\ell) = Q(k;\{\ell\}\times \R)=\int_{\R} Q(k;\{\ell\}\times dy).
\]
  For any $n\ge 2$, the bounded positive measure $\cY_{k,\ell,n}$ is defined by the convolution product of the semi-Markov kernel $Q$, that is 
	\begin{equation} \label{71}
\forall B \in B(\R^d), \quad	 \cY_{k,\ell,n}(1_B):= \P_k\{ X_n=\ell, Y_n\in B\} = Q^{\star n}(k,\{\ell\}\times B)
\end{equation}
Then, the Theorem~\ref{main-id} for the density process of $Y_n/\sqrt{n}$ could be specified to this specific class of MAPs. Since we only have to replace time $t$ by $n$ in all the material developed in Section~\ref{sec-ass-stat}, we omit the details. Note that the only simplification in assumptions of  Theorem~\ref{main-id} is on the uniform moment condition $\M{\alpha}$ which reduces to: the r.v.~$Y_1$ satisfies the following moment condition of order $\alpha$: 
\begin{equation} \label{moment-alpha-discret} 
M_{\alpha}:= \max_{k\in\X}  \E_k\big[\|Y_1\|^{\alpha}\big] < \infty.  
\end{equation} 
We will only illustrate our main result on the MRP embedded in a Markovian Arrival Process (e.g. see \cite[XI.1]{Asm03}).

 Recall that a $N$-state Markovian Arrival Process is a continuous-time MAP $\{(J_t,N_t)\}_{t \ge 0}$ on the state space $\{1,\ldots ,N\}\times \N$,  where $N_t$ represents the number of arrivals up to time $t$, while the states of the driving Markov process
$\{J_t\}_{t \ge 0}$ are called phases. Let $Y_n$ be the time at the $n$th arrival ($Y_0=0$ a.s.) and let $X_n$ be the state of the driving process just after the $n$th arrival. Then $\{(X_n,Y_n)\}_{n\in\N}$ is known to be an MRP with the following semi-Markov kernel $Q$ on $\{1,\ldots,N\}\times \R$: for any $(k,\ell)\in\X^2$  
	\begin{equation} \label{70}
	  Q(k;\{\ell\}\times dy) := 
	(e^{yD_0 }D_1)(k,\ell) \, 1_{(0,\infty)}(y) dy = e_k e^{yD_0}D_1 {e_{\ell}}^{\top}\, 1_{(0,\infty)}(y) \, dy
\end{equation}
which is parametrized by a pair of $N\times N$-matrices usually denoted by $D_0$ and $D_1$. Such a process is also an instance of  Markov Random Walk. 
The matrix $D_0+D_1$ is the  infinitesimal generator of the background Markov process $\{J_t\}_{t \ge 0}$. Matrix $D_0$ is always assumed to be stable and $D_0+D_1$ to be irreducible. In this case, $\{J_t\}_{t \ge 0}$ has a unique invariant probability measure denoted by $\pi$. 
The process $\{X_n\}_{n\in\N}$ is a Markov chain with state space $\X:=\{1,\ldots,N\}$ and transition probability matrix $P$:
\begin{equation} \label{P_MRP}
 \forall (k,\ell)\in\X^2, \quad P(k,\ell) =  Q(k;\{\ell\}\times \R) = \left((-D_0)^{-1}D_1\right)(k,\ell).
\end{equation}
This Markov chain has an invariant probability measure $\boldsymbol{\phi}$ (different of $\pi$). From (\ref{70}), the bounded positive measure $\cY_{k,\ell,1}$ is  
 absolutely continuous with respect to the Lebesgue measure on $\R$ with density $g_{k,\ell,1}(y) =  e_k e^{yD_0} D_1 {e_{\ell}}^{\top} 1_{(0,\infty)}(y)$. Then matrix $G_t$ defined in (\ref{def-Gt-density}) has the form 
	\[ G_1(y) = e^{yD_0} D_1 \, 1_{(0,\infty)}(y).
\]
For any $n\ge 2$, the positive measure $\cY_{k,\ell,n}$ in (\ref{71}) is absolutely continuous with respect to the Lebesgue measure with density given by 
\begin{equation} \label{convol}
\forall y\in\R, \quad  G_n(y) = (G_{n-1} \star G_1 )(y) =: \bigg( \sum_{j\in\X} \big( G_{k,j,1} \star  G_{j,\ell,n-1}\big)(y)\bigg)_{k,\ell\in\X^2}.
\end{equation}

Let us check the assumptions of Theorem~\ref{main-id}. Assumption~{\IP}~on $P$ in (\ref{P_MRP}) is standard in the literature on the Markov Arrival Process.   It is well known that  $Y_1$ has a moment of order $3$ given by $E_k[(Y_1)^3] = 3!\, e_k (-D_0)^{-3}\,\mathbf{1}^{\top}$, so that  Condition~(\ref{moment-alpha-discret}) holds with $\alpha=3$. Let us give some details for checking  Conditions~\textbf{(AC\ref{AS2})-(AC\ref{AS0})}.

\noindent\textbf{(AC\ref{AS0})} : The open convex $\cD_n$ involved in \textbf{(AC\ref{AS0})} is given for any $n\ge 1$  by 
	\[ \cD_n:=]0,\infty[, \quad \text{ with } \overline{\cD}_n=[0,\infty), \quad \partial \cD_n = \{0\}.
\]
It is clear that $G_1(y)$  is continuous on $\overline{\cD}_1$ and differentiable on $\cD_1$ with differential 
\begin{equation}\label{G1-deriv}
 \forall y >0,\quad \frac{dG_{1}}{dy}(y) = D_0 e^{yD_0} D_1. 
\end{equation}
Since $G_n$ is obtained from convolution product of $G_1$, the same properties are also valid for $G_n$. 
Next, we prove by induction that 
\begin{equation} \label{Gn-borne}
	 \sup_{n>0}\sup_{y\in[0,+\infty)}\|G_n(y)\|_0 \le N\|D_1\|_0.
\end{equation}
For $n:=1$, we have from norm equivalence (\ref{equiv-norm}) and $\| e^{D_0}\|_{\infty}\le 1$ that 
\begin{eqnarray*}
 \forall y\in [0,+\infty),\quad \|G_1(y)\|_0 		& = & \| e^{yD_0} D_1\|_0 \le \|e^{yD_0} D_1\|_{\infty} \le \| e^{yD_0} \|_{\infty} \|D_1\|_{\infty} \\
	& \le & N \| D_1 \|_{0}. 
\end{eqnarray*}
Using the definition (\ref{convol}) of $G_n(y)$ and the induction hypothesis, we have 
\begin{eqnarray*}
 | G_{k,\ell,n}(y)| & = & \sum_j \int_{\R} |G_{k,j,1}(u)| |G_{j,\ell,n-1}(t-u)| du \le N \|D_1\|_0 \int_{\R} G_{k,j,1}(u) du  \\
 & \le &  N \|D_1\|_0 \, \P_k\{X_1=\ell\} \le  N \|D_1\|_0.
\end{eqnarray*}
The proof of estimate (\ref{Gn-borne}) is complete.

Second, we easily obtain from (\ref{G1-deriv}) that 
$$ \sup_{y \in(0,+\infty)}\|\frac{dG_1}{dy}(y)\|_0 \le \big(N \max(\|D_0\|_0,\|D_1\|_0)\big)^2  $$
Next, using an induction, we can obtain from the following well known property of the convolution product  
$$\frac{dG_n}{dy}(y) = \bigg( G_{1} \star \frac{dG_{n-1}}{dy}\bigg)(y)$$
that 
\[ \sup_{n>0} \sup_{y\in (0,+\infty)} \| \frac{dG_n}{dy}(y)\|_0 \le  \big(N\max(\|D_0\|_0, \|D_1\|_0)\big)^2. 
\]
Finally, $G_1(0) = D_1$ and  $G_n(0)= 0$ for any $n\ge 2$.

\noindent\textbf{(AC\ref{AS2})} :  First, observe that $\mu_{k,\ell,n}$ is 0 for any $(k,\ell)\in\X^2$ and $n\ge 1$. Thus (\ref{masse-id}) is satisfied for $\rho=0$. In Condition~(AC\ref{AS2}), we only have to check that 
	\[ \| \widehat G_{n_0}(\zeta)\|_0 \longrightarrow 0\quad \text{when}\ |\zeta|\r+\infty
\]
for $n_0$ large enough. It follows from the convolution definition (\ref{convol}) of $G_n$ that  
\begin{equation*} 
\forall n\in\N,\ \forall \zeta\in\R,\qquad \widehat G_n(\zeta) := \big(\E_{k}\big[1_{\{X_n=\ell\}}\, e^{i \langle \zeta , Y_n \rangle}\big]\big)_{(k,\ell)\in\X^2} = \widehat G_1(\zeta)^n
\end{equation*}
where 
\begin{equation*} \label{Fourier_MAP}
	\widehat G_1(\zeta) =  \int_0^{+\infty} e^{i \zeta  y} e^{D_0 y}dy \, D_1 
\end{equation*}
 is integrable. In fact, using an integration by parts, we obtain that 
\begin{eqnarray*}
\forall \zeta \neq 0, \qquad 	\widehat G_1(\zeta) & = & \frac{-1}{i\zeta} \big[ D_1 + \int_0^{+\infty} e^{i\zeta y }D_0 e^{yD_0} D_1\, dy\big]
\end{eqnarray*}
so that 
	\[ \forall \zeta\neq 0, \qquad  \|\widehat G_1(\zeta)\|_0 \le \frac{2\| D_1\|_0}{|\zeta|}.
\]
Therefore,  we deduce from norm equivalence (\ref{equiv-norm}) that for any integer  $n_0\ge 1$,
	\[  \| \widehat G_{n_0}(\zeta)\|_0 
	\le \frac{(2N\|D_1\|_{0})^{n_0}}{|\zeta|^{n_0}} \longrightarrow 0\quad \text{when}\ |\zeta|\r+\infty.
\]

\section{Application to the local times of a jump process} \label{appli-local-times}

\subsection{The local limit theorem for the density of local times}

Let $\{X_t\}_{t\ge 0}$ be a Markov jump process  with finite state space $\X:=\{1,\ldots,N\}$ and generator $G$. Its transition semi-group is given by 
	\[ \forall t\ge 0, \quad P_t := e^{tG}.
\]
The local time $L_t(i)$ at time $t$ associated with state $i\in\X$, or the sojourn time in state $i$ on the interval $[0,t]$,  is defined by 
	\[ \forall t\ge 0, \quad L_t(i):= \int_0^t 1_{\{X_s=i\}} \, ds.
\]
It is well known that $L_t(i)$ is an additive functional of $\{X_t\}_{t\ge 0}$ and that $\{(X_t,L_t(i))\}_{t\ge 0}$ is an MAP. 
In this section, we consider the MAP $\{(X_t,L_t)\}_{t\ge 0}$ where $L_t$ is the random vector of the local times 
$$L_t:= (L_t(1),\ldots,L_t(N)).$$
 Note that, for all $t>0$, we have $\langle L_t, \mathbf{1}\rangle =t$, that is $L_t$ is $\cS_t$-valued where 
$$\cS_t := \big\{ y\in [0,+\infty)^N: \langle y, \mathbf{1}\rangle =t\big\}
.$$ 
Let us assume that $\{X_t\}_{t\ge 0}$ has an invariant probability measure $\pi$. This happens when the generator $G$ is irreducible. Set $m= (m_1,\ldots,m_N):=\E_{\pi}[L_1]$.  
We define the $\cS_{t}^{(0)}$-valued centered r.v. $$Y_t = L_t - tm$$ where $\cS_{t}^{(0)}:=T_{-tm}(\cS_t)$ with the translation $T_{-tm}$ by vector $-tm$ in $\R^N$. Note that  $\cS_{t}^{(0)}$ is a subset of the hyperplane $H$ of $\R^N$ defined by  
\begin{equation} \label{Def_H_Local}
H:=\big\{y\in\R^N: \ \langle y, \mathbf{1}\rangle = 0\big\}.
\end{equation}
Let $\Lambda$ be the bijective map from $H$ into $\R^{N-1}$ defined by $\Lambda(y):=(y_1,\ldots,y_{N-1})$ for $y:=(y_1,\ldots,y_N)\in H$. 
We introduce the following $(N-1)$-dimensional random vector  
$$Y'_t:=\Lambda(Y_t) = (Y_t(1),\ldots,Y_t(N-1)).$$
The following lemma follows from Lemma~\ref{lem_TMAP}.
\begin{lem}
The process $\{(X_t,Y'_t)\}_{t\ge 0}$ is a $\X\times\overline{\cD}_t$-valued MAP where $\cD_t$ is the open convex of $\R^{N-1}$ defined by 
\begin{equation} \label{Def_Dt}
	\cD_t := \big\{ y'\in \R^{N-1} : j=1,\ldots, N-1, \ y'_j \in (-m_j t, (1-m_j)t), \ \langle y',\mathbf{1}\rangle < m_N t \big\}.
\end{equation} 
\end{lem}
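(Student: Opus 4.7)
The plan is to derive both assertions from Lemma~\ref{lem_TMAP}, which says the MAP property is preserved under linear transformations of the second component. First I would note that each local time $L_t(i)$ is an additive functional of $\{X_t\}$, so the vector $L_t=(L_t(1),\ldots,L_t(N))$ makes $\{(X_t,L_t)\}$ an $\X\times\R^N$-valued MAP. Subtracting the deterministic drift $tm$ preserves this additivity, hence $\{(X_t,Y_t)\}$ is also a MAP. Then the map $T:\R^N\to\R^{N-1}$, $T(y_1,\ldots,y_N):=(y_1,\ldots,y_{N-1})$, is linear and agrees with $\Lambda$ on the hyperplane $H$ in which $Y_t$ actually lies; applying Lemma~\ref{lem_TMAP} to $T$ yields that $\{(X_t,Y'_t)\}=\{(X_t,T(Y_t))\}$ is a MAP on $\X\times\R^{N-1}$.

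It remains to locate $Y'_t$ inside $\overline{\cD}_t$. The elementary bound $L_t(i)\in[0,t]$ valid for every $i\in\X$ translates into $Y_t(i)\in[-t m_i,t(1-m_i)]$, and for $j=1,\ldots,N-1$ this is exactly the coordinate-wise constraint defining the closure of $\cD_t$. The last inequality $\langle Y'_t,\mathbf{1}\rangle\leq t m_N$ encodes the constraint $\sum_i L_t(i)=t$: since $\sum_i m_i=1$ we get $\sum_i Y_t(i)=0$, so $Y_t(N)=-\langle Y'_t,\mathbf{1}\rangle$, and the nonnegativity $L_t(N)\geq 0$ translates into $Y_t(N)\geq -t m_N$, i.e.\ $\langle Y'_t,\mathbf{1}\rangle\leq t m_N$.

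This is essentially a bookkeeping verification rather than a substantial result, so there is no real obstacle. The only point worth making explicit is the compatibility between the extension $T$ (used to invoke Lemma~\ref{lem_TMAP}) and the bijection $\Lambda$ (used in the definition of $Y'_t$): they coincide on $H$, which is where the trajectories of $Y_t$ are confined, so the two descriptions of $Y'_t$ match. Note finally that the closure $\overline{\cD}_t$ rather than $\cD_t$ must be used because $L_t(i)$ can attain the values $0$ and $t$ with positive probability (for instance $L_t(i)=t$ on the event that $X_s=i$ for all $s\in[0,t]$).
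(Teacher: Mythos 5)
Your proof is correct and takes essentially the same route as the paper, which simply notes that the lemma follows from Lemma~\ref{lem_TMAP}; you invoke the same linear-transformation lemma (through the obvious extension $T$ of $\Lambda$ to $\R^N$) and then spell out the elementary support bookkeeping ($L_t(i)\in[0,t]$, $\sum_i L_t(i)=t$, $\sum_i m_i=1$) that pins $Y'_t$ inside $\overline{\cD}_t$.
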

\begin{pro} \label{Lem_Jointe}
If $G$ and the sub-generators $G_{i^ci^c}:=(G(k,\ell))_{k,\ell\in\{i\}^c}, \,i=1,\ldots,N$ are irreducible then the MAP $\{(X_t,Y'_t)\}_{t\ge 0}$ satisfies the conditions $\M{3}$ and \emph{\textbf{(AC1)-(AC2)}}. 
\end{pro}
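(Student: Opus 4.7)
I verify $\M{3}$, \textbf{(AC1)} and \textbf{(AC2)} in turn, exploiting the explicit path/holding-time structure of the Markov jump process.

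\emph{For $\M{3}$:} since $L_v(j)\in[0,v]$ deterministically, the centered vector $Y_v=L_v-vm$ satisfies $\|Y'_v\|\le 2\sqrt{N-1}$ for every $v\in(0,1]$, so \eqref{moment-alpha} with $\alpha=3$ follows at once.

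\emph{For \textbf{(AC1)}:} the Lebesgue decomposition of $\cY_{k,\ell,t}$ is dictated by the event $A_t:=\{L_t(j)>0,\ \forall j\in\X\}$ that every state has been visited by time $t$. On $A_t$, the conditional law of $L_t$ is absolutely continuous on the interior of the simplex $\cS_t$ and, via $\Lambda$, yields $\cG_{k,\ell,t}$ with density $g_{k,\ell,t}$ supported in $\overline{\cD}_t$; on $A_t^c$ some $L_t(j)=0$, so $L_t$ lies on a face of $\cS_t$ of dimension $<N-1$ and contributes entirely to $\mu_{k,\ell,t}$. A union bound gives
$$\|\cM_t(1_{\R^{N-1}})\|_0\le \max_{k\in\X}\P_k(A_t^c)\le \sum_{j=1}^N \max_k \P_k\{L_t(j)=0\}\le \sum_{j=1}^N\|e^{t G_{j^cj^c}}\|_\infty.$$
Irreducibility of $G$ makes the spectral abscissa $-\alpha_j$ of each $G_{j^cj^c}$ strictly negative (state $j$ is accessible from $\{j\}^c$), and the irreducibility of $G_{j^cj^c}$ makes $-\alpha_j$ a simple Perron eigenvalue; hence $\|\cM_t(1_{\R^{N-1}})\|_0=O(\rho^t)$ with $\rho:=\exp(-\min_j\alpha_j)\in(0,1)$, which is \eqref{masse-id}. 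For \eqref{fourier-id}, $G_w$ is a bounded $L^1$ function with compact support in $\overline{\cD}_w\subset[-w,w]^{N-1}$, so Riemann--Lebesgue gives pointwise decay, and uniformity on $w\in[t_0,2t_0)$ is obtained from the $C^1$-bounds on $G_w$ proved below via one integration by parts against $e^{i\langle\zeta,y\rangle}$.

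\emph{For \textbf{(AC2)}:} the open convex $\cD_t$ is given by \eqref{Def_Dt} and the vanishing of $G_t$ outside $\overline{\cD}_t$ is built in. The regularity and the pointwise bounds on $G_t$ rely on the path decomposition: for each admissible jump path $\sigma=(\sigma_0=k,\ldots,\sigma_n=\ell)$ with $\sigma_i\neq\sigma_{i+1}$ visiting every state (writing $n_j:=\#\{i:\sigma_i=j\}$), integration of the joint holding-time density at fixed local times $L_1,\ldots,L_N$ yields the contribution
$$\prod_{i=0}^{n-1}G(\sigma_i,\sigma_{i+1})\;\prod_{j=1}^N\frac{L_j^{n_j-1}}{(n_j-1)!}\,e^{G(j,j)L_j},$$
with $L_j=y'_j+tm_j$ for $j<N$ and $L_N=tm_N-\langle y',\mathbf{1}\rangle$. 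Continuity on $\overline{\cD}_t$ and smoothness on $\cD_t$ are then immediate. For the bounds \eqref{Bornes_G} and \eqref{Bornes_G-dif} I use $e^{G(j,j)L_j}\le 1$, the rate bound $q^n$ per path (with $q:=\max_j|G(j,j)|$), and a multinomial estimate of order $t^{n+1-N}/(n+1-N)!$ on the sum $\sum_{(n_j)\ge 1,\,\sum n_j=n+1}\binom{n+1}{n_1,\ldots,n_N}\prod_j L_j^{n_j-1}/(n_j-1)!$; together these make the path series converge uniformly in $t>0$ and $y\in\overline{\cD}_t$. On $\partial\cD_t$ some $L_j$ vanishes, annihilating every term with $n_j\ge 2$, and the surviving paths (visiting $j$ exactly once and therefore spending almost all of $[0,t]$ in $\{j\}^c$) have combined weight controlled by $\|e^{t G_{j^cj^c}}\|_\infty$, whose Perron decay---guaranteed by the irreducibility of each $G_{i^c i^c}$---beats the polynomial $O(t^{-N/2})$ required in \eqref{Bornes_G-front}.

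\emph{Main obstacle.} The technical heart is the uniform-in-$t$ control of the path-series representation of $G_t$ and its derivatives, and especially the boundary rate \eqref{Bornes_G-front}: one must simultaneously tame the combinatorial explosion of jump paths and the polynomial factors $L_j^{n_j-1}$, showing that they are absorbed by the exponentials $e^{G(j,j)L_j}$ and by the Perron decay of the sub-semigroups $e^{t G_{j^cj^c}}$. This is precisely the step where irreducibility is required of each $G_{i^c i^c}$ (and not merely of $G$), in order to produce the clean Perron rate needed for the boundary estimate.
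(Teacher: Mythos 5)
Your verifications of $\M{3}$ and of the singular-part estimate in \textbf{(AC1)} coincide with the paper's: boundedness of $L_v$ gives $\M{\alpha}$ for all $\alpha$, and the union bound $\|\cM_t(1_{\R^{N-1}})\|_0\le\sum_j\max_k\P_k\{L_t(j)=0\}\le\sum_j\|e^{tG_{j^cj^c}}\|_\infty$ with Perron decay of the sub-semigroups is exactly what appears in Section~5.2.2. Your integration-by-parts route to the Fourier decay in \textbf{(AC1)} is also the paper's.

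Where the argument does not close is \textbf{(AC2)}. You take a direct path-sum representation of $G_t$: a sum over jump skeletons $\sigma$ of $\prod_iG(\sigma_i,\sigma_{i+1})\prod_jL_j^{n_j-1}/(n_j-1)!\,e^{G(j,j)L_j}$. That series is not absolutely summable with clean uniform-in-$t$ bounds: after summing over paths of length $n$ one gets a factor $\le(q')^n$ with $q':=\max_j|G(j,j)|$, and the multinomial estimate you invoke gives $t^{n+1-N}/(n+1-N)!$, so the $n$-sum is of order $e^{q't}$; the surviving exponential $\prod_je^{G(j,j)L_j}=e^{\sum_jG(j,j)L_j}$ only contributes $e^{-\underline{q}t}$ with $\underline{q}:=\min_j|G(j,j)|$, which does not cancel $e^{q't}$ unless all diagonal rates coincide. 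The boundedness of $G_t$ is of course true, but it rests on cancellations that a termwise absolute-value bound cannot see, and you flag this yourself as the ``main obstacle'' without resolving it. The paper sidesteps this entirely by \emph{uniformization}: introduce $a>\max_j|G(j,j)|$ and the stochastic matrix $\widetilde P=I+G/a$, and use Sericola's formula (\ref{densite_jointe}), which writes $\Psi_t$ as a Poisson mixture $\sum_n e^{-at}(at)^n/n!$ of \emph{normalized} multinomial weights $x^{t;y}_{n;k_1,\ldots,k_{N-1}}$ (summing to $1/n!$, see (\ref{binomial})) and \emph{probabilities} $p_{k,\ell}\le1$. Every term is then at most $a^{N-1}e^{-at}(at)^n/n!$, so (\ref{Bornes_jointe_Psi}) follows immediately, the shifted recurrence of the multinomial coefficients gives (\ref{Borne_deriv_Psi}) with bound $2a^N$, and the boundary estimate (\ref{Borne_bord_Psi}) reduces to controlling visit-count probabilities $\P_k\{V^i_n=0\}$, $\P_k\{V^i_n=1\}$ of the uniformized chain, which is where the irreducibility of each $G_{i^ci^c}$ (hence of $\widetilde P_{i^ci^c}$) enters via Perron--Frobenius (Lemma~\ref{nbvisits}). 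So the missing idea is precisely the uniformization/Poissonization that converts your divergent-looking path series into an absolutely convergent one with the required uniform bounds; without it the central estimates (\ref{Bornes_G}), (\ref{Bornes_G-front}), (\ref{Bornes_G-dif}) are not established, and the Fourier decay in \textbf{(AC1)} (which you derive from those $C^1$ bounds) inherits the same gap.
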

Note that the matrix $\Sigma$ in the CLT for $\{t^{-1/2}Y'_t\}_{t>0}$ is invertible from \cite[Remark~2.3]{HerLed13} and that $O(\sup_
{y\notin {\cal D}_t}\eta_{_\Sigma}(t^{-1/2}y)) = O(t^{-1/2})$ from (\ref{Def_Dt}). Under the assumptions of Proposition~\ref{Lem_Jointe} on the generator of $\{X_t\}_{t\ge 0}$, Theorem~\ref{main-id} gives that 
$$\sup_{y' \in \R^{N-1}} \big|f'_{k,t}(y') - (2\pi)^{-(N-1)/2}(\det \Sigma)^{-1/2} e^{-\frac{1}{2}\langle {y'}^{\top},\Sigma {y'}^{\top} \rangle}\big| = O\big(t^{-1/2}\big),$$
where $f'_{k,t}$ is the density of the a.c.~part of the probability distribution of $t^{-1/2}Y'_t$. An explicit form of $\Sigma$ is provided in \cite[Remark~3.1]{HerLed13}. 

In order to obtain a statement in terms of $Y_t$, we can use the following lemma. Recall that $\ell_{N-1}$ denotes the Lebesgue measure on $\R^{N-1}$ and $B(H)$ stands for the Borelian $\sigma$-algebra on $H$. Let $\nu$ be the measure defined on $(H,B(H))$ as the image measure of $\ell_{N-1}$ under $\Lambda^{-1}$, that is: for any  positive and $B(H)$-measurable function $\psi : H\r\R$, 
$$\int_H \psi(y)\, \nu(dy) := \int_{\R^{N-1}} \psi(\Lambda^{-1}y')\, dy'.$$
Using the bijection $\Lambda$, the following lemma can be easily proved (see Appendix~\ref{B}).
\begin{lem} \label{Lem52} For any $(k,\ell)\in\X^2$, the a.c.~and singular parts of the Lebesgue decomposition of the measure $\P_k\{X_t=\ell,Y_t\in \cdot\}$ on $H$ with respect to $\nu$  are obtained from image measure under  $\Lambda$ of the a.c.~and singular parts of the Lebesgue decomposition of the measure $\P_k\{X_t=\ell,Y'_t\in \cdot\}$ on $\R^{N-1}$ with respect to $\ell_{N-1}$  . 
\end{lem}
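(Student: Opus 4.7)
The plan is to transport the Lebesgue decomposition through the bijection $\Lambda$ and use its uniqueness. Concretely, denote $\mu := \P_k\{X_t=\ell, Y_t\in \cdot\}$ on $(H,B(H))$ and $\mu':= \P_k\{X_t=\ell, Y'_t\in\cdot\}$ on $\R^{N-1}$. Since $Y'_t = \Lambda(Y_t)$ by definition, these two finite positive measures are related by
$$\forall B\in B(H),\quad \mu(B) \ = \ \mu'(\Lambda(B)),\qquad \forall B'\in B(\R^{N-1}),\quad \mu'(B') \ =\ \mu(\Lambda^{-1}(B')),$$
that is, $\mu$ is the image of $\mu'$ under $\Lambda^{-1}$. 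The same is true for the reference measures by definition of $\nu$.

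Next, I would write the Lebesgue decomposition of $\mu'$ with respect to $\ell_{N-1}$ as $\mu' = \mu'_{\mathrm{ac}} + \mu'_s$, let $E'\in B(\R^{N-1})$ be a support set witnessing the mutual singularity (i.e.~$\ell_{N-1}(E')=0$ and $\mu'_s(\R^{N-1}\setminus E')=0$), and transport the pieces by defining $\mu_{\mathrm{ac}}:=\mu'_{\mathrm{ac}}\circ\Lambda$ and $\mu_s:=\mu'_s\circ\Lambda$. Additivity then gives $\mu=\mu_{\mathrm{ac}}+\mu_s$.

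Then I would verify the two defining properties of the Lebesgue decomposition with respect to $\nu$. For absolute continuity: if $\nu(B)=0$ for some $B\in B(H)$, then $\ell_{N-1}(\Lambda(B))=0$ by construction of $\nu$, hence $\mu_{\mathrm{ac}}(B)=\mu'_{\mathrm{ac}}(\Lambda(B))=0$. For mutual singularity: setting $E:=\Lambda^{-1}(E')\in B(H)$, one gets $\nu(E)=\ell_{N-1}(E')=0$ and $\mu_s(H\setminus E) = \mu'_s(\R^{N-1}\setminus E')=0$. By the uniqueness of the Lebesgue decomposition, $\mu_{\mathrm{ac}}$ and $\mu_s$ are exactly the a.c.~and singular parts of $\mu$ with respect to $\nu$, which is the claim.

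No step is really an obstacle here; the only point that deserves care is the Borel bi-measurability of $\Lambda$ (immediate since $\Lambda$ is linear and bijective between $H$ and $\R^{N-1}$) so that all the images of Borel sets used above are themselves Borel.
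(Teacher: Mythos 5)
Your proposal is correct and follows essentially the same strategy as the paper's Appendix~\ref{B}: push the Lebesgue decomposition of $\P_k\{X_t=\ell,Y'_t\in\cdot\}$ forward through the bijection $\Lambda^{-1}$ and check that the transported pieces satisfy absolute continuity w.r.t.~$\nu$ and mutual singularity, concluding by uniqueness. The only cosmetic difference is that you invoke uniqueness of the Lebesgue decomposition explicitly, whereas the paper writes out the density $g'\circ\Lambda$ and exhibits the disjoint supporting sets $E=\Lambda^{-1}(E')$, $F=\Lambda^{-1}(F')$ by hand; the underlying argument is identical.
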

Therefore we know that the density, say $f_{k,t}$, of the a.c.~part of the probability distribution of 
$$t^{-1/2}Y_t=t^{-1/2}(L_t-tm)$$ with respect to the measure $\nu$ on the hyperplane $H$ is given by  
$$\forall h\in H,\quad f_{k,t}(h) := f'_{k,t}(\Lambda h).$$
Finally, we have obtained the following local limit theorem for $f_{k,t}$.
\begin{pro}[\protect{\cite[Prop.~3.1]{HerLed13}}]
If $G$ and the sub-generators $G_{i^ci^c}:=(G(k,\ell))_{k,\ell\in\{i\}^c}, \,i=1,\ldots,N$ are irreducible, then the density $f_{k,t}$ of the a.c.~part of the probability distribution of $t^{-1/2}(L_t-tm)$ under $\P_k$ satisfies 
$$\sup_{h \in H} \big|f_{k,t}(h) - (2\pi)^{-(N-1)/2}(\det \Sigma)^{-1/2} e^{-\frac{1}{2}\langle \Lambda h^{\top},\Sigma \Lambda h^{\top} \rangle}\big| = O\big(t^{-1/2}\big).$$
\end{pro}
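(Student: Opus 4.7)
The plan is to apply Theorem~\ref{main-id} to the MAP $\{(X_t, Y'_t)\}_{t\ge 0}$ and then transfer the resulting local limit theorem from $Y'_t$ back to $Y_t$ via the linear bijection $\Lambda$, using Lemma~\ref{Lem52}.

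First I would verify the hypotheses of Theorem~\ref{main-id}. Irreducibility of $G$ yields a unique invariant probability $\pi$ for $\{X_t\}_{t\ge 0}$ and makes $P := P_1 = e^{G}$ irreducible and aperiodic, so \IP{} holds. The moment condition $\M{3}$ and the regularity assumptions \textbf{(AC\ref{AS2})-(AC\ref{AS0})} for $\{(X_t,Y'_t)\}_{t\ge 0}$ are exactly the content of Proposition~\ref{Lem_Jointe}, while invertibility of the asymptotic covariance $\Sigma$ is the observation noted via \cite[Remark~2.3]{HerLed13}. Theorem~\ref{main-id} then gives
$$\sup_{y' \in \R^{N-1}} \big|f'_{k,t}(y') - \eta_{_\Sigma}(y')\big| \le O\big(t^{-1/2}\big) + O\Big(\sup_{y\notin \cD_t}\eta_{_\Sigma}(t^{-1/2}y)\Big).$$
The second error term is absorbed into the first: from (\ref{Def_Dt}), the complement $\R^{N-1}\setminus \cD_t$ is a union of half-spaces whose bounding hyperplanes are at Euclidean distance at least $c\,t$ from the origin for some $c>0$ and all $t$ large enough, hence $\|t^{-1/2}y\| \ge c\sqrt{t}$ on that complement. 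Since $\Sigma$ is positive definite, $\eta_{_\Sigma}(t^{-1/2}y)$ decays exponentially in $t$ there, which is $o(t^{-1/2})$.

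It remains to translate the estimate into a statement about $Y_t$. Because $\Lambda$ is a linear bijection and $Y'_t = \Lambda(Y_t)$, we also have $t^{-1/2}Y'_t = \Lambda(t^{-1/2}Y_t)$, and Lemma~\ref{Lem52} applied to the law of $t^{-1/2}Y_t$ identifies the density of its a.c.~part with respect to $\nu$ as $f_{k,t}(h) = f'_{k,t}(\Lambda h)$ for $h\in H$. The substitution $y' = \Lambda h$ is a bijection $H \to \R^{N-1}$, so $\sup_{h\in H}$ matches $\sup_{y'\in\R^{N-1}}$ after it, and $\eta_{_\Sigma}(\Lambda h)$ is precisely the Gaussian density appearing in the statement. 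Combining this identification with the estimate from the previous step produces the claimed $O(t^{-1/2})$ bound. No step presents a real obstacle; the only genuine computation is the exponential decay of the boundary term, which is immediate from the linear-in-$t$ growth of the constraints defining $\cD_t$.
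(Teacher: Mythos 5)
Your proposal matches the paper's own derivation: invoke Theorem~\ref{main-id} for the MAP $\{(X_t,Y'_t)\}$, using Proposition~\ref{Lem_Jointe} for the hypotheses and the cited remark for invertibility of $\Sigma$, note that the boundary term $O(\sup_{y\notin\cD_t}\eta_{_\Sigma}(t^{-1/2}y))$ is $O(t^{-1/2})$ because $\cD_t$ grows linearly in $t$, and then transfer to $Y_t$ via the bijection $\Lambda$ and Lemma~\ref{Lem52}. The only difference is that you spell out the exponential decay of the boundary term, which the paper leaves as an observation from (\ref{Def_Dt}); the argument is otherwise identical.
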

It remains to prove that Proposition~\ref{Lem_Jointe} holds true.

\subsection{The main lines of the derivation of Proposition~\ref{Lem_Jointe}} \label{main_lines_proof}

Let us introduce the randomized Markov chain $\{Z_n\}_{n\in\N}$ with state space $\X$ and transition matrix 
$$\widetilde P:= I + G/a \quad \text{with } a > \max(|G(j,j)|, j \in\X).$$
Since $G$ is assumed to be irreducible, the transition matrix $\widetilde{P}$ is irreducible and aperiodic. Moreover, since $\widetilde P(i,i)>0$ for any $i\in\X$, the sub-stochastic matrix $\widetilde P_{i^ci^c}:=(\widetilde P(k,\ell))_{k,\ell \in\{i\}^c}$ is aperiodic.  

Let us define the following convex open set of $\R^{N-1}$
	\[ \forall t>0, \quad \cC_t := \big\{ y\in]0,t[^{N-1}, \langle y,\mathbf{1} \rangle <t\big\}.
\]
For any  $t>0$, the adherence of $\cC_t$ is denoted by $\overline{\cC}_t$ and is $\overline{\cC}_t := \big\{ y\in[0,t]^{N-1}, \langle y,\mathbf{1} \rangle \le t\big\}$
and the boundary of $\cC_t$ is $\partial \cC_t := \overline{\cC}_t \backslash \cC_t$ given by
\begin{equation} \label{frontier}
 \partial \cC_t = \bigcup_{i=1}^N \big\{ y=(y_1,\ldots,y_{N-1})\in \overline{\cC}_t \mid y_i:=0\big\} \cup \big\{ y\in \overline{\cC}_t \mid \langle y,\mathbf{1} \rangle =t\big\}. 
\end{equation} 
The joint conditional distribution of $(X_t,L'_t):=(X_t,L_t(1),\ldots,L_t(N-1))$ under $\P_k$ is 0 on $\R^{N-1} \backslash \overline{\cC}_t$ and its a.c. part has the following density density $\psi_{k,\ell,t}$ from \cite[Cor. 4.4]{Ser99} 
\begin{eqnarray} \label{densite_jointe}
\lefteqn{	\forall (k,\ell)\in\X^2, \forall y\in\R^{N-1}, \ \psi_{k,\ell,t}(y) := 1_{\mathcal{C}_t}(y)\, a^{N-1} \sum_{n=0}^{\infty} e^{-a t} \frac{(a t)^n}{n!}}\\
 &  & \times  \sum_{\tiny \begin{array}{c}k_1\ge 0,\ldots, k_{N-1}\ge 0,\\ \sum_{j=1}^{N-1} k_j\le n\end{array}}  \, x_{n;k_1,\ldots,k_{N-1}}^{t;y}\,  p_{k,\ell}(n+N,k_1,\ldots,k_{N-1}) \nonumber
\end{eqnarray}
where the non-negative coefficient $p_{k,\ell}(n+N,k_1,\ldots,k_{N-1})$ satisfies $0\le \sum_{\ell=1}^{N-1} p_{k,\ell}(n+N,k_1,\ldots,k_{N-1})\le 1$ and we have the following relation 
\begin{equation} \label{binomial}
 \sum_{\tiny \begin{array}{c}k_2\ge 0,\ldots, k_N\ge 0,\\ \sum_{j=2}^N k_j\le n\end{array}} n! \, x_{n;k_2,\ldots,k_N}^{t;y_2,\ldots,y_N}=1
\end{equation}

For every $(k,\ell)\in\X^2$, the Lebesgue decomposition of the positive measure $\cF_{k,\ell,t}$ defined by  $\P_k\{X_t=\ell,L'_t \in \cdot\}$ writes as follows 
 \begin{equation} \label{Lebesgue_Psi}
  \forall B \in B(\R^{N-1}), \quad \cF_{k,\ell,t}(1_B) = \P_k\{X_t=\ell,L'_t \in B\} = \int_B \psi_{k,\ell,t}(y)\, dy + \alpha_{k,\ell,t}(1_B).
  \end{equation}

First note that Condition \IP~is fulfilled since the matrix $P:=P_1= e^{G}$ is irreducible and aperiodic
when the generator $G$ of the driving jump process is irreducible. Second, the random variables $\|L_t\|$ (and so $\|Y_t\|$) are bounded, 
so that the moment condition $\M{\alpha}$ is satisfied for any $\alpha >0$. This allows us to apply Theorem~\ref{lem-clt} to derive a CLT for $\{t^{-1/2}Y'_t\}_{t\ge 0}$. Next, the main steps of the proof are to check  that Conditions~\textbf{(AC\ref{AS2})-(AC\ref{AS0})} hold under the assumptions of Proposition~\ref{Lem_Jointe} on the generator $G$.

\subsubsection{Checking Condition (AC\ref{AS0})}

For every $y\in\R^{N-1}$, we introduce the following real $N\times N$-matrix
	\[ \Psi_t(y):= (\psi_{k,\ell,t}(y))_{(k,\ell)\in\X^2}.
\]
where $\psi_{k,\ell,t}$ is defined in (\ref{densite_jointe}). The useful properties of  $\psi_{k,\ell,t}$ are given in the next lemma which is proved in Appendix~\ref{annexeA}.
\begin{lem} \label{Lem_Psi}
For any $t>0$, $\Psi_t$ vanishes on $\R^d \backslash \overline{\cC}_t$, is continuous on $\overline{\cC}_t$ and differentiable on $\cC_t$. We have 
\begin{subequations}
\begin{gather} \label{Bornes_jointe_Psi}
\sup_{t>0} \sup_{y\in \R^{N-1}} \|\Psi_t(y)\|_0 \le \sup_{t>0} \sup_{y\in \R^{N-1}} \|\Psi_t(y)\mathbf{1}^{\top}\|_0 \le a^{N-1}; \\
j=1,\ldots,N-1 \qquad \sup_{t>0} \sup_{y\in\mathcal{C}_t} \big\| \frac{\partial \Psi_t}{\partial y_j}(y)\big\|_0 \le 2a^{N}.\label{Borne_deriv_Psi}
\end{gather}
There exists $\rho\in(0,1)$ such that 
\begin{equation} \label{Borne_bord_Psi}
	\forall y\in\partial \cC_t, \quad \|\Psi_t(y)\|_0 = O(e^{at(\rho-1)}(1+at)). 
\end{equation}
\end{subequations}
\end{lem}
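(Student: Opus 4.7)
The vanishing of $\Psi_t$ outside $\overline{\cC}_t$ is immediate from the indicator $1_{\cC_t}(y)$ in (\ref{densite_jointe}). For the smoothness assertions, I would observe that each $x_{n;k_1,\ldots,k_{N-1}}^{t;y}$ is polynomial in $y$ (it is a multinomial monomial in $y_1,\ldots,y_{N-1},t-\langle y,\mathbf{1}\rangle$). Combined with $|p_{k,\ell}|\le 1$ and the identity (\ref{binomial}), the Poisson weights $e^{-at}(at)^n/n!$ give uniform convergence of the series and of its termwise derivative on compact subsets of $\cC_t$, which justifies continuous extension to $\overline{\cC}_t$ and differentiability on $\cC_t$.

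For the uniform bound (\ref{Bornes_jointe_Psi}), using $|p_{k,\ell}|\le 1$ gives
$$|\psi_{k,\ell,t}(y)| \le a^{N-1}\sum_{n\ge 0}e^{-at}\frac{(at)^n}{n!}\sum_{k_1,\ldots,k_{N-1}} x_{n;k_1,\ldots,k_{N-1}}^{t;y} = a^{N-1}\sum_{n\ge 0}e^{-at}\frac{(at)^n}{(n!)^2} \le a^{N-1},$$
where the equality comes from (\ref{binomial}) and the last inequality from $1/n!\le 1$ and $\sum_n e^{-at}(at)^n/n!=1$. The row-sum bound $\|\Psi_t(y)\mathbf{1}^{\top}\|_0\le a^{N-1}$ follows by exchanging the sums over $\ell$ and over the multi-indices and using the stronger constraint $\sum_{\ell}p_{k,\ell}\le 1$ in the same way.

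For the derivative estimate (\ref{Borne_deriv_Psi}), a direct computation on the multinomial density gives $\partial_{y_j}x_{n;k}^{t;y} = t^{-1}\bigl(x_{n-1;k-e_j}^{t;y} - x_{n-1;k}^{t;y}\bigr)$ (with the convention that undefined shifted terms vanish). Term-by-term differentiation in (\ref{densite_jointe}), together with $|p_{k,\ell}|\le 1$ and (\ref{binomial}) applied at rank $n-1$, produces
$$\Bigl|\frac{\partial \psi_{k,\ell,t}}{\partial y_j}(y)\Bigr| \le 2a^{N-1}\sum_{n\ge 1}e^{-at}\frac{(at)^n}{n!(n-1)!\,t} = 2a^{N}\sum_{m\ge 0}e^{-at}\frac{(at)^m}{(m+1)!\,m!} \le 2a^{N},$$
bounding $1/(m+1)!\le 1$ and normalizing the Poisson series shifted by one.

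The boundary bound (\ref{Borne_bord_Psi}) is the main obstacle. By (\ref{frontier}), $\partial\cC_t$ is the union of the faces $F_i:=\{y_i=0\}\cap\overline{\cC}_t$ for $i=1,\ldots,N-1$ and the face $\{\langle y,\mathbf{1}\rangle=t\}$ (which corresponds to $L_t(N)=0$). On $F_i$, only multi-indices with $k_i=0$ survive in the continuous extension of (\ref{densite_jointe}), because $y_i^{k_i}\to 0$ when $k_i\ge 1$; via the probabilistic interpretation of $p_{k,\ell}$ in terms of the randomized chain $\{Z_n\}$ associated with $\widetilde P=I+G/a$, these correspond to trajectories that avoid state $i$, and are therefore governed by the sub-stochastic matrix $\widetilde P_{i^c i^c}=I+G_{i^c i^c}/a$. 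Under the irreducibility of $G_{i^c i^c}$, a Perron-Frobenius argument yields $\|\widetilde P_{i^c i^c}^{\,n}\|_0\le C(1+n)\rho_i^n$ for some $\rho_i\in(0,1)$. Setting $\rho:=\max_i\rho_i\in(0,1)$ and summing the resulting Poisson series yields $\|\Psi_t(y)\|_0=O(e^{at(\rho-1)}(1+at))$, the polynomial factor $(1+at)$ coming from the factor $(1+n)$ in the Perron-Frobenius estimate. The technical heart of this last step is the precise identification of the sub-generator $G_{i^c i^c}$ inside the combinatorial coefficient $p_{k,\ell}(n+N,k_1,\ldots,k_{N-1})$ on each face.
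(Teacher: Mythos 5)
Your handling of the vanishing, the bound (\ref{Bornes_jointe_Psi}), and the derivative bound (\ref{Borne_deriv_Psi}) is essentially the paper's own argument, modulo the paper's internal inconsistency between (\ref{Coeffts}) (which is the multinomial pmf, so $\sum_k x_{n;k}^{t;y}=1$) and (\ref{binomial}) as literally written (which would give $\sum_k x_{n;k}^{t;y}=1/n!$); you followed the latter, which introduces a harmless extra $1/n!$ in the Poisson sums but yields the same final constants.

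The sketch for (\ref{Borne_bord_Psi}), however, misattributes the source of the polynomial factor and slightly misidentifies the event. First, for an irreducible aperiodic sub-stochastic matrix $\widetilde P_{i^ci^c}=I+G_{i^ci^c}/a$ the Perron root $\rho_i$ is simple and strictly dominant, so $\|\widetilde P_{i^ci^c}^{\,n}\|_0=O(\rho_i^n)$ with no polynomial correction; your ``$\le C(1+n)\rho_i^n$'' is formally true but is not where the factor $(1+at)$ originates. Second, on the coordinate faces $\{y_i=0\}\cap\overline{\cC}_t$ ($i<N$) the surviving multi-indices satisfy $k_i=0$, and since $p_{k,\ell}(n+N,k_1,\ldots,k_{N-1})=\P_k\{V_{n+N-1}^1=k_1+1,\ldots,V_{n+N-1}^{N-1}=k_{N-1}+1,Z_{n+N-1}=\ell\}$, the constraint is $V^i_{n+N-1}=1$ (\emph{exactly one} visit to $i$), not ``avoid state $i$''; the avoidance event $V^N=0$ governs only the simplex face $\{\langle y,\mathbf{1}\rangle=t\}$ in (\ref{frontier}). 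It is the one-visit probability $\P_k\{V^i_n=1\}$ that carries the extra factor $n$, via a first-passage decomposition (enter $i$ once at some time $j\le n$, avoid $i$ before and after, sum over $j$), giving $\P_k\{V^i_n=1\}=O(\rho_i^n+n\rho_i^{n-1})$ as in Lemma~\ref{nbvisits}; summing the Poisson series against this yields $O\big((1+at)e^{at(\rho-1)}\big)$. As written, your argument would either need to prove the one-visit estimate you did not state, or would inadvertently give the correct order only because the inflated matrix-norm bound happens to overshoot by exactly the missing factor.
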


Now, let us check that Conditions~\textbf{(AC2)} hold for the function 
\begin{eqnarray}
	G_t(y) = \Psi_t(y+m't)
\end{eqnarray}
associated with the absolutely continuous part of the probability distribution of the centered r.v. $(X_t,Y'_t) = (X_t,L'_t - m't)$ under $\P_k$ and $m':=(\pi_1,\ldots,\pi_{N-1})$. It vanishes on the open convex $\cD_t := T_{-m't}(\cC_t)$ which is the translate of $\cC_t$ by the translation $T_{-m't}$ of vector $-m't$. More precisely, 
\begin{subequations}
\begin{equation*} 
	\cD_t = \big\{ y\in \R^{N-1} : j=1,\ldots, N-1, \ y_j \in (-m_j t, (1-m_j)t), \, \langle y,\mathbf{1} \rangle < m_{N}t \big\}.
\end{equation*}
with adherence 
\begin{equation} \label{Def_barDt}
	\overline{\cD}_t = \big\{ y\in \R^{N-1} : j=1,\ldots, N-1, \ y_j \in [-m_j t, (1-m_j)t], \, \langle y,\mathbf{1} \rangle \le m_N t \big\}.
\end{equation}
and boundary 
\begin{equation} \label{Def_frontier_Dt}
\partial \cD_t = \bigcup_{j=1}^{N-1}\big\{ y\in \overline{\cD}_t \mid  y_j=-m_j t \bigg\} 
	\cup \big\{y\in \overline{\cD}_t :  \langle y,\mathbf{1} \rangle = m_N t \big\}.
\end{equation}
\end{subequations}
Then it follows from the basic properties of function $\Psi_t$ on $\cC_t$ stated in Lemma~\ref{Lem_Psi} that $G_t$ is continuous on $\overline{\cD}_t$ and differentiable on $\cD_t$ with differential 
	\[ \forall j=1,\ldots, N-1,  \ \forall y\in \cD_t, \quad \frac{\partial G_t}{\partial y_j}(y) = \frac{\partial \Psi_t}{\partial y_j}(y+m't) .
\]
Moreover, we obtain from (\ref{Bornes_jointe_Psi})-(\ref{Borne_bord_Psi})
\begin{gather*}
	 \sup_{t>0} \sup_{y\in \R^{N-1}} \|G_t(y)\|_0 \le 
	 \sup_{t>0} \sup_{y\in \R^{N-1}} \|G_t(y)\mathbf{1}^{\top}\|_0 \le \sup_{t>0} \sup_{y\in \R^{N-1}} \|\Psi_t(y)\mathbf{1}^{\top}\|_0 \le a^{N-1}; \\
	 		\forall y\in\partial \mathcal{D}_t, \quad \|G_t(y)\|_0 = \| \Psi_t(y+m't)\|_0 = O\big(\frac{1}{t}\big);\\
	\sup_{t>0} \sup_{y\in\mathcal{D}_t} \big\|  \frac{\partial G_t}{\partial y}(y)\big\|_0 \le \sup_{t>0} \sup_{y\in\mathcal{C}_t} \big\| \frac{\partial \Psi_t}{\partial y}(y)\big\|_0 \le 2a^{N}.
\end{gather*}

\subsubsection{Checking Condition (AC1)}

For any $i\in\{1,\ldots,N\}$, we introduce the following $(N-1)\times (N-1)$-subgenerator of $G$, $G_{i^ci^c} := (G(k,\ell))_{k,\ell\in \{i\}^c}$. 
If $G_{i^ci^c}$ is irreducible then $\|e^{t G_{i^ci^c}}\|_0=O(e^{-r_i t})$ where $-r_i$ is the Perron-Frobenius negative eigenvalue of $G_{i^ci^c}$. Thus, if $G_{i^ci^c}$ is irreducible for any $i\in\{1,\ldots,N\}$
\begin{equation} \label{Perron-Frobenius}
 \max_{i\in\{1,\ldots,N\}} \|e^{tG_{i^ci^c}}\|_0 = O(e^{-rt}) \qquad \text{where } r:=\min_i(r_i)>0.
\end{equation}

In a first step, we study the singular part $\cA_t$ of the Lebesgue decomposition (\ref{Lebesgue_Psi}) 
where $\cA_t$ is the $N\times N$-matrix with entries in the set of bounded positive measures on $\R^{N-1}$
 $$\cA_t :=(\alpha_{k,\ell,t})_{(k,\ell)\in\X^2}.$$ 
We show that $\|\cA_t(1_{\R^{N-1}})\|_0$ goes to 0 at a geometric rate when $t$ grows to infinity. Note that $\cA_t(1_B)=0$ for every $B\in B(\R^{N-1})$ such that $B\cap \partial \cC_t=\emptyset$. Next, it remains to show that there exist $c>0$ and $\rho\in(0,1)$ such that 
	\[\forall t>0, \quad  \| \cA_t(1_{\partial\mathcal{C}_t})\|_0 \le c \rho ^t.
\]
First, let us consider, for any $i\in\{1,\ldots,N-1\}$, the set $d_{i,t}:=\big\{ (y_1,\ldots,y_{N-1})\in \overline{\cC}_t \mid y_i=0\big\}$:
\begin{eqnarray*}
	\alpha_{k,\ell,t}(1_{d_{i,t}}) 
	& \le & \P_k\{L_t(i)=0\}  = \begin{cases} 0 & \text{ if $k= i$} \\
	\sum_{\ell }e^{t G_{i^ci^c}}(k,\ell) & \text{ if $k\neq i$.} \end{cases}
\end{eqnarray*}
 Thus,  $\max_i\| \cA_t(1_{d_{i,t}})\|_0 = O(e^{-rt})$ from (\ref{Perron-Frobenius}). 
Second let us denote $s_{t}:=\big\{ (y_1,\ldots,y_{N-1})\in \overline{\cC}_t \mid \sum_{j=1}^{N-1} y_j =t\big\}$. We can write for all $(k,\ell)\in\X^2$
\begin{eqnarray*}
	\alpha_{k,\ell,t}(1_{s_{t}}) & \le & \P_k\{L_t(N)=0,X_t=\ell\} \\
	& \le & \P_k\{L_t(N)=0\}  = \begin{cases} 0 & \text{ if $k= N$} \\
	\sum_{\ell }e^{t G_{N^cN^c}}(k,\ell) & \text{ if $k\neq N$.} \end{cases}
\end{eqnarray*}
Therefore, $ \|\cA_t(1_{s_{t}})\|_0=O(e^{-r t})$.
Combining the previous estimates, we obtain that there exist $c>0$ and $\rho\in(0,1)$ such that 
\[ \forall t >0, \quad \|\cA_t(1_{\R^{N-1}})\|_0 = \| \cA_t(1_{\partial\mathcal{C}_t})\|_0 \le c \rho^t.
\]
It follows that 
there exist  $c>0$ and $\rho\in(0,1)$ such that
\[  \forall t >0, \quad \|\cM_t(1_{\R^{N-1}})\|_0 = \| \cA_t(1_{\R^{N-1}})\|_0 \le c \rho^t.
\]
where $\cM_t$ is the matrix associated with the singular part of the probability distribution of $(X_t,Y'_t)$.

In a second step, we prove that for every $t_0>0$
$$ \sup_{t\in[t_0,2t_0)} \| \widehat \Psi_{t}(\zeta)\|_0 \longrightarrow 0\quad \text{when}\ \|\zeta\|\r+\infty .$$
Indeed, for every $t>0$, for every $(k,\ell)\in\X^2$, the Fourier transform $\widehat \psi_{k,\ell,t}$ of $\psi_{k,\ell,t}$ has the following form (if $N=2$ consider   only the first integral) 
	\[ \widehat \psi_{k,\ell,t}(\zeta) = \int_0^t \int_0^{t-y_1}\cdots \int_0^{t-\sum_{j\le N-2}y_j} \psi_{k,\ell,t}(y) e^{i\zeta_{N-1} y_{N-1}}dy_{N-1} \ldots dy_2\,dy_1. 
\]
Using an integration by part, we obtain that for every $\zeta_{N-1}\neq 0$
\begin{eqnarray*}
 \lefteqn{\int_0^{t-\sum_{j\le N-2}y_j} \psi_{k,\ell,t}(y) e^{i\zeta_{N-1} y_{N-1}} \, dy_{N-1} =} \\
 &  & \frac{1}{i\zeta_{N-1}} \left(\left[ \psi_{k,\ell,t}(y)e^{i \zeta_{N-1} y_{N-1}} \right]_{y_{N-1}=0}^{y_{N-1}=t-\sum_{j\le N-2}y_j} - \int_0^{t-\sum_{j\le N-2}y_j} \frac{\partial \psi_{k,\ell,t}}{\partial y_{N-1}}(y)\, e^{i y_{N-1}\zeta_{N-1}}\, dy_{N-1} \right)
\end{eqnarray*}
so that, we obtain from the bounds (\ref{Bornes_jointe_Psi})-(\ref{Borne_deriv_Psi}) that 
\begin{eqnarray*}
|\widehat\psi_{k,\ell,t}(\zeta)| & \le & \frac{1}{|\zeta_{N-1}|} \int_0^t \cdots \int_0^{t-\sum_{j\le N-3}y_j} \big(2a^{N-1} + 2a^N (t-\sum_{j\le N-2}y_j)\big) d y_{N-2}\ldots dy_{1} \\
& \le& \frac{1}{|\zeta_{N-1}|} \big(2a^{N-1} \frac{t^{N-2}}{(N-2)!} + 2 a^{N} \frac{t^{N-1}}{(N-1)!}\big).
\end{eqnarray*}
Finally, for every $t_0>0$
$$\sup_{t\in[t_0,2t_0)} \|\widehat\Psi_{t}(\zeta) \|_0 \leq \frac{2a^{N-1}(1+a)\max(1,t_0^{N})}{|\zeta_{N-1}|}.$$
For each $j\in\{1,\ldots,y_{N-2}\}$, using similar computations from an integration by parts with respect to the variable $y_j$, the same inequality holds with  $\zeta_j$ in place of $\zeta_{N-1}$. Therefore, we obtain that
\begin{equation*}
\forall \zeta\in\R^{N-1}\backslash \{0\}, \quad \sup_{t\in[t_0,2t_0)} \|\widehat\Psi_{t}(\zeta) \|_0 \leq \frac{2a^{N-1}(1+a)\max(1,t_0^{N})}{\|\zeta\|_0}.
\end{equation*}
and this quantity goes to 0 when $\|\zeta\|\r+\infty$.
Since $G_t(y)=\Psi_t(y+tm)$ we have $\widehat G_t (\zeta) = e^{-i\langle m',\zeta\rangle t}\widehat \Psi_t(\zeta)$ for any $\zeta \in\R^{N-1}$  
and 
	\[ \forall t>0, \ \forall \zeta \in\R^{N-1}, \quad \|\widehat G_t(\zeta) \|_0 = \| \widehat \Psi_t(\zeta)\|_0.
\]
It follows that 
\begin{equation*} 
\Gamma(\zeta)\equiv\Gamma_{t_0}(\zeta) :=  \sup_{t\in[t_0,2t_0)} \| \widehat G_{t}(\zeta)\|_0 \longrightarrow 0\quad \text{when}\ \|\zeta\|_0\r+\infty.
\end{equation*}

%

\section{A uniform LLT with respect to transition matrix $P$} \label{sec-unif}
Let $\cP$ denote the set of irreducible and aperiodic stochastic $N\times N$-matrices.  The topology in $\cP$ is  associated (for instance) with the distance $d(P,P') := \|P-P'\|_\infty\, $ ($P,P'\in\cP$). Again $(X_t,Y_t)_{t\in \T}$ is a centered MAP with state space $\X \times \R^d$, where  $\X:=\{1,\ldots,N\}$, and $\{P_t\}_{t\in\T}$ denotes the transition semi-group of the Markov process $\{X_t\}_{t\in\T}$. In this section, the stochastic matrix  $P:=P_1$ is assumed to belong to a compact subset $\cP_0$ of $\cP$, and we give assumptions for the LLT of Theorem~\ref{main-id} to hold uniformly in $P\in\cP_0$. 

For every $k\in\X$ the underlying probability measure $\P_k$ and the associated expectation $\E_k$ depend on $P$. To keep in mind this dependence, they are  denoted by $\P_k^{P}$ and $\E_k^{P}$ respectively. Similarly we use the notations $\Sigma^P$, $\cY_{t}^{P}$, $\cG_t^{P}$, $\cM_t^{P}$ and $G_t^{P}$ for the  covariance matrix of Theorem~\ref{lem-clt} and the matrices in (\ref{def-meas-mat})-(\ref{def-Gt-density}) respectively. Let $\mathfrak{M}$ denote the space of the probability measures on  $\R^d$ equipped with the total variation distance $d_{TV}$. 

Let $\cP_0$ be any compact subset of $\cP$. Let us consider the following assumptions: 
\begin{assu}: \label{U1}
For any $(k,\ell)\in \X^2$, the map $P\mapsto \cY_{k,\ell,1}^{P}$ is continuous from $(\cP_0,d)$ into $(\mathfrak{M},d_{TV})$. 
\end{assu}
\begin{assu}: \label{U4} 
There exist positive constants $\alpha$ and $\beta$ such that 
\begin{equation} \label{alpha-beta} 
\forall P\in\cP_0,\ \forall \zeta\in\R^d,\quad \alpha\, \|\zeta\|^2 \leq \langle \zeta,\Sigma^P \zeta \rangle \leq \beta\, \|\zeta\|^2. 
\end{equation} 
\end{assu}
\begin{assu}: \label{U3-global} 
The conditions $\M{3}$, \emph{\textbf{(AC\ref{AS2})}} and \emph{\textbf{(AC\ref{AS0})}} hold uniformly in $P\in \cP_0$.
\end{assu}
\begin{theo} \label{main-unif}
Under Assumptions~(U\ref{U1})-(U\ref{U3-global}), for every $k\in \X$, the density $f_{k,t}^{P}(\cdot)$ of the a.c.~part of the probability distribution of $t^{-1/2}Y_t$ under $\P_k^{P}$ satisfies the following asymptotic property when $t\r+\infty$: 
$$\sup_{P\in{\cal P}_0} \sup_{y\in\R^d}\big|f_{k,t}^{P}(y) - \eta_{_{\Sigma^P}}(y)\big| = O(t^{-1/2}) + O\big(\sup_{P\in{\cal P}_0}\sup_
{y\notin {\cal D}_t}\eta_{_{\Sigma^P}}(t^{-1/2}y) \big).$$
\end{theo}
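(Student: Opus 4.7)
The plan is to retrace the proof of Theorem~\ref{main-id} given in \cite[Section~3]{HerLed13} and upgrade each ingredient to hold uniformly for $P\in\cP_0$. After the usual Fourier inversion, $f_{k,t}^{P}(y)-\eta_{\Sigma^P}(y)$ is expressed as an integral over $\zeta\in\R^d$ of the difference between the characteristic function of the a.c.~part of $t^{-1/2}Y_t$ (read off from $\widehat\cY_t^{P}(t^{-1/2}\zeta)$ via (\ref{car-fct}), minus the singular-part contribution $\cM_t^{P}$) and $e^{-\frac{1}{2}\langle\zeta,\Sigma^P\zeta\rangle}$. I would split the $\zeta$-domain into three pieces: a small ball $\{\|\zeta\|\leq\delta\sqrt t\}$, a middle annulus $\{\delta\sqrt t\leq\|\zeta\|\leq A\sqrt t\}$, and the tail $\{\|\zeta\|\geq A\sqrt t\}$, and produce uniform-in-$P$ bounds on each.

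The three regional estimates correspond exactly to uniform versions of Lemmas~\ref{lem-dec-vp}, \ref{lem-non-ari} and~\ref{lem-hat-g}. For the annulus, the key point is that $(P,\zeta)\mapsto \widehat\cY_1^{P}(\zeta)$ is jointly continuous on $\cP_0\times\{\delta\leq\|\zeta\|\leq A\}$ (by Assumption~(U\ref{U1})) and that the uniform non-arithmetic condition embedded in (U\ref{U3-global}) forces the spectral radius of $\widehat\cY_1^{P}(\zeta)$ to be strictly less than $1$ at every point of that compact set; compactness then produces a single $\tau=\tau(\delta,A)\in(0,1)$ such that (\ref{ineg-non-ari}) holds for all $P\in\cP_0$. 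For the tail, the uniform version of (AC\ref{AS2}) supplied by (U\ref{U3-global}) gives the same decay as Lemma~\ref{lem-hat-g} with constants $A$, $C$ and $t_0$ independent of $P\in\cP_0$, and a uniform geometric bound $c\rho^t$ on $\|\cM_t^{P}(1_{\R^d})\|_0$.

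The main obstacle is the uniform perturbation step behind Lemma~\ref{lem-dec-vp}. One needs a common radius $\delta>0$ on which $\widehat\cY_1^{P}(\zeta)$ admits the dominant-eigenvalue decomposition $\lambda^P(\zeta)\Pi^P(\zeta)+N^P(\zeta)$, with all the quantitative estimates (\ref{exp-val-pert0})--(\ref{exp-val-pert2}) holding with $\delta$, $C$, $r$ depending only on $\cP_0$. The input ingredients are: (i) by (U\ref{U1}) and compactness of $\cP_0\subset\cP$, the simple eigenvalue $1$ of $P$ is separated from the rest of the spectrum uniformly in $P\in\cP_0$, so Kato-type analytic perturbation theory provides $\lambda^P(\zeta)$, $\Pi^P(\zeta)$, $N^P(\zeta)$ jointly continuous in $(P,\zeta)$ on $\cP_0\times B(0,\delta)$ for some $\delta>0$; (ii) the uniform moment bound from $\M{3}$ in (U\ref{U3-global}) controls the Taylor coefficients of $\zeta\mapsto \widehat\cY_1^{P}(\zeta)$ at $0$ uniformly in $P$, which combined with (U\ref{U4}) (giving $1-\lambda^P(\zeta)\geq \tfrac{\alpha}{2}\|\zeta\|^2-O(\|\zeta\|^3)$) yields (\ref{exp-val-pert0})--(\ref{exp-val-pert0-bis}) with constants depending only on $\alpha$, $\beta$ and $M_3$; (iii) continuity of the spectral projector and of the reduced resolvent over the compact $\cP_0$ provides the remaining estimates (\ref{exp-val-pert1})--(\ref{exp-val-pert2}) with $C$, $r$ independent of $P$.

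Once the three lemmas are available in uniform form, the assembly is purely formal and mirrors \cite[Section~3]{HerLed13}. The small-ball integral produces $\eta_{\Sigma^P}(y)$ up to an $O(t^{-1/2})$ remainder thanks to the uniform bound (\ref{exp-val-pert0-bis}) (with the Gaussian envelope dominated uniformly by (U\ref{U4})), the annulus integral contributes $O(\tau^{\lfloor t\rfloor})$, and the tail integral contributes $O(t^{-1/2})$ from Lemma~\ref{lem-hat-g} plus the singular-support correction term $O\bigl(\sup_{y\notin\cD_t}\eta_{\Sigma^P}(t^{-1/2}y)\bigr)$ exactly as in the non-uniform case. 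Taking the supremum over $P\in\cP_0$ at the very end — which is legitimate since every constant appearing in the estimates has been shown to depend only on $\cP_0$, $\alpha$, $\beta$ and the uniform constants of (U\ref{U3-global}) — yields the conclusion of Theorem~\ref{main-unif}.
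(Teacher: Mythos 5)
Your proposal is correct and follows essentially the same route as the paper: reduce Theorem~\ref{main-unif} to uniform-in-$P$ versions of Lemmas~\ref{lem-dec-vp}, \ref{lem-non-ari} and~\ref{lem-hat-g}, with the joint continuity of $(P,\zeta)\mapsto\widehat{\cY_1^P}(\zeta)$ (Lemma~\ref{lem-cont-enP}) plus compactness of $\cP_0$ driving both the uniform perturbation radius $\delta$ and the spectral-radius argument for the annulus, and with (U\ref{U4}) securing the Gaussian envelope estimate~(\ref{exp-val-pert0-bis}). Only two small points are glossed over relative to the paper: the continuity lemma needs the uniform moment bound in addition to (U\ref{U1}), and in continuous time the passage from $t=n\in\N$ to general $t\in\T$ invokes \cite[Prop.~4.4]{FerHerLed12} together with the uniform boundedness of the derivatives of $\zeta\mapsto\widehat{\cY_v^{P}}(\zeta)$ over $(P,v)\in\cP_0\times(0,1]$.
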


Assumptions~(U\ref{U3-global}) read as follows: 
\begin{assu3}: \label{U3} 
The r.v.~$\{Y_v\}_{v\in(0,1]\cap\T}$ satisfies the following  moment condition: 
\begin{equation} \label{moment-alpha-bis} 
M:= \sup_{P\in{\cal P}_0} \max_{k\in\X}  \sup_{v\in(0,1]\cap\T} \E_k^{P}[\|Y_v\|^3] < \infty.  
\end{equation}
\end{assu3}
\begin{assu3}: \label{U2}
There exist $c>0$ and $\rho\in(0,1)$ such that   
\begin{equation} \label{masse-id-unif}
\forall t >0,\quad  \sup_{P\in{\cal P}_0} \| \cM_{t}^{P}(1_{\R^d}) \|_{0}  \leq c\rho^t 
\end{equation}
and there exists $t_0>0$ such that 
\begin{subequations}
\begin{equation} \label{cond-c-rho-unif}
\rho^{t_0}\max(2,cN)\leq 1/4 
\end{equation}
\begin{equation} \label{fourier-id-unif}
\Gamma_{t_0}(\zeta) :=  \sup_{P\in{\cal P}_0} \sup_{w\in[t_0,2t_0)} \| \widehat {G_{w}^{P}}(\zeta)\|_0 \longrightarrow 0\quad \text{when}\ \|\zeta\|\r+\infty.
\end{equation}
\end{subequations}
\end{assu3}
\begin{assu3}: \label{U2-cas1}
For any $t>0$, there exists an open convex subset $\cD_t$ of $\R^d$ such that $G_t$ vanishes on $\R^d\setminus\overline{\cD}_t$, where $\overline{\cD}_t$ denotes the adherence of $\cD_t$. Moreover $G_t$ is continuous on $\overline{\cD}_t$ and differentiable on $\cD_t$, with in addition 
\begin{subequations}
\begin{gather} 
\sup_{P\in{\cal P}_0} \sup_{t>0} \sup_{y\in \overline{{\cal D}}_t} \| G_t^{P}(y)\|_0 <\infty \label{Bornes_G-unif}\\ 
 \sup_{P\in{\cal P}_0} \sup_{y\in \partial {\cal D}_t}  \|G_t^{P}(y)\|_0 = O\big(\frac{1}{t}\big) \quad \text{where}\ \ \partial {\cal D}_t := \overline{{\cal D}}_t\setminus \cD_t \label{Bornes_G-front-unif} \\
 j=1,\ldots,d:\quad  \sup_{P\in{\cal P}_0}\sup_{t>0}\sup_{y\in {\cal D}_t} \big\| \frac{\partial G_t^{P}}{\partial y_j}(y)\big\|_0 <\infty. \label{Bornes_G-dif-unif} 
\end{gather}
\end{subequations}
\end{assu3}

\begin{proof}{}
The proof of Theorem~\ref{main-unif} borrows the same way as for Theorem~\ref{main-id}. What we only have to do is to prove that the bounds in Lemmas~\ref{lem-dec-vp}-\ref{lem-hat-g} are uniform in $P\in\cP_0$ under Assumptions~(U\ref{U1})-(U\ref{U4}). For Lemma~\ref{lem-hat-g}, this is obvious by using (U3-\ref{U2}). The proof of Theorem~\ref{main-unif} will be complete if we establish the uniform version of Lemma~\ref{lem-dec-vp} and \ref{lem-non-ari}. This is done below.  
\end{proof}
\begin{lem} \label{lem-cont-enP}
Assume that Condition~(U\ref{U1}) holds and that, for some $\alpha>0$, 
$$M_\alpha:= \sup_{P\in{\cal P}_0} \max_{k\in\X}  \sup_{v\in(0,1]\cap\T} \E_k^{P}[\|Y_v\|^\alpha] < \infty.$$ 
Then the map $(P,\zeta)\mapsto \widehat{\cY_1^{P}}(\zeta)$ is continuous  from $\cP_0\times\R^d$ into $\cM_N(\C)$.
\end{lem}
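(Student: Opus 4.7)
{ (plan for Lemma~\ref{lem-cont-enP})}
Fix an entry index $(k,\ell)\in\X^2$ and a point $(P_0,\zeta_0)\in\cP_0\times\R^d$. Since $\|\cdot\|_\infty$ and $\|\cdot\|_0$ are equivalent on $\cM_N(\C)$ by \eqref{equiv-norm}, it suffices to show that each entry
$$\Phi_{k,\ell}(P,\zeta):=\bigl(\widehat{\cY_1^{P}}(\zeta)\bigr)_{k,\ell}=\int_{\R^d}e^{i\langle\zeta,y\rangle}\,d\cY_{k,\ell,1}^{P}(y)$$
is jointly continuous at $(P_0,\zeta_0)$. The plan is to perform the standard ``add-and-subtract'' decomposition
\begin{equation*}
\Phi_{k,\ell}(P,\zeta)-\Phi_{k,\ell}(P_0,\zeta_0) = T_1(P,\zeta)+T_2(\zeta),
\end{equation*}
with
\begin{equation*}
T_1(P,\zeta):=\int_{\R^d}e^{i\langle\zeta,y\rangle}\,d\bigl(\cY_{k,\ell,1}^{P}-\cY_{k,\ell,1}^{P_0}\bigr)(y),\qquad T_2(\zeta):=\int_{\R^d}\bigl(e^{i\langle\zeta,y\rangle}-e^{i\langle\zeta_0,y\rangle}\bigr)d\cY_{k,\ell,1}^{P_0}(y),
\end{equation*}
and to bound the two terms separately. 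The idea is that $T_1$ is controlled by~(U\ref{U1}) uniformly in $\zeta$, while $T_2$ is controlled by dominated convergence; the moment condition $M_\alpha<\infty$ enters only to produce a modulus of continuity in $\zeta$ that is uniform in $P_0$, a feature that is convenient but not strictly needed for joint continuity at a single point.

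For $T_1$, since $|e^{i\langle\zeta,y\rangle}|=1$, the general inequality $|\int f\,d(\mu-\nu)|\leq\|f\|_\infty\, d_{TV}(\mu,\nu)$ applied to finite positive measures gives
$$|T_1(P,\zeta)|\ \leq\ d_{TV}\bigl(\cY_{k,\ell,1}^{P},\cY_{k,\ell,1}^{P_0}\bigr),$$
which is independent of $\zeta$ and tends to $0$ as $P\to P_0$ by Assumption~(U\ref{U1}). For $T_2$, the elementary estimate $|e^{iu}-e^{iv}|\leq\min(2,|u-v|)$ yields, for any $R>0$,
\begin{equation*}
|T_2(\zeta)|\ \leq\ \int_{\{\|y\|\leq R\}}\|\zeta-\zeta_0\|\,\|y\|\,d\cY_{k,\ell,1}^{P_0}(y)+2\int_{\{\|y\|>R\}}d\cY_{k,\ell,1}^{P_0}(y)\ \leq\ R\|\zeta-\zeta_0\|+\frac{2M_\alpha}{R^\alpha},
\end{equation*}
where the tail bound uses Markov's inequality together with $\cY_{k,\ell,1}^{P_0}\{\|y\|>R\}\leq\P_k^{P_0}\{\|Y_1\|>R\}\leq M_\alpha/R^\alpha$. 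Given $\varepsilon>0$, first fix $R$ large so that $2M_\alpha/R^\alpha<\varepsilon/2$, then restrict to $\|\zeta-\zeta_0\|<\varepsilon/(2R)$; this makes $|T_2(\zeta)|<\varepsilon$.

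Combining the two estimates and taking $(P,\zeta)$ sufficiently close to $(P_0,\zeta_0)$ proves the joint continuity. Note that the $T_2$-bound depends only on $M_\alpha$ and not on $P$, so the argument simultaneously delivers equicontinuity of $\zeta\mapsto\widehat{\cY_1^P}(\zeta)$ in $P\in\cP_0$, which is the form in which the lemma will later be used to upgrade Lemmas~\ref{lem-dec-vp} and~\ref{lem-non-ari} to their uniform versions. There is no serious obstacle: the only point that needs care is that~(U\ref{U1}) is stated in total variation, which is precisely what is needed to integrate the bounded modulus-one function $e^{i\langle\zeta,\cdot\rangle}$ against the difference of the two measures with a bound independent of $\zeta$.
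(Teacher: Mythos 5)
Your proof is correct and follows the same overall strategy as the paper's: split the increment of $\widehat{\cY_1^{P}}(\zeta)_{k,\ell}$ into a total-variation piece (continuity in $P$, handled by (U\ref{U1})) and an oscillation-in-$\zeta$ piece (handled by the moment condition). The only real difference is in the second piece: the paper first reduces to $\alpha\in(0,1]$ and then uses the pointwise H\"older bound $|e^{iu}-1|\leq 2|u|^{\alpha}$ together with Cauchy--Schwarz, which yields the explicit and $P$-uniform modulus $2M_{\alpha}\|\zeta-\zeta'\|^{\alpha}$ in one line; you instead truncate at radius $R$, bound the tail by Markov's inequality, and pass to an $\varepsilon$--$\delta$ argument. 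Both are valid; the paper's route is slightly shorter and produces a clean quantitative modulus, while yours avoids the reduction to $\alpha\le1$ and also gives $P$-uniform equicontinuity, as you note. One tiny remark: your displayed decomposition $\Phi_{k,\ell}(P,\zeta)-\Phi_{k,\ell}(P_0,\zeta_0)=T_1(P,\zeta)+T_2(\zeta)$ should have $T_1$ evaluated at $\zeta_0$ (or, equivalently, $T_2$ integrated against $\cY^{P}$), but since both variants are controlled identically this is only a bookkeeping slip.
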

\begin{proof}{}
We may suppose that $\alpha\in(0,1]$. Let $(k,\ell)\in \X^2$, $(P,P')\in\cP_0^2$ and $(\zeta,\zeta')\in\R^d\times\R^d$. To simplify we write $\cY$ and $\cY'$ for $\cY_{k,\ell,1}^{P}$ and $\cY_{k,\ell,1}^{P'}$.  Then 
\begin{eqnarray*}
\big| \big(\widehat{\cY_1^{P}}(\zeta)\big)_{k,\ell} - \big(\widehat{\cY_1^{P'}}(\zeta')\big)_{k,\ell} \big| &=& \big| \int_{\R^d} e^{i \langle \zeta , y \rangle} \cY(dy) - \int_{\R^d} e^{i \langle \zeta' , y \rangle} \cY'(dy)\big| \\
&\leq& \int_{\R^d} \big| e^{i \langle \zeta , y \rangle} - e^{i \langle \zeta' , y \rangle}\big| \, \cY(dy) \\
&\ & \qquad +\ \  \big|\int_{\R^d} e^{i \langle \zeta' , y \rangle} \cY(dy) - \int_{\R^d} e^{i \langle \zeta' , y \rangle} \cY'(dy)\big| \\
&\leq& 2\, \|\zeta-\zeta'\|^\alpha\, \int_{\R^d} \|y\|^\alpha\, \cY(dy) + d_{TV}(\cY,\cY')\\
&\leq&  2 M_{\alpha}\, \|\zeta-\zeta'\|^{\alpha} + d_{TV}(\cY,\cY')
\end{eqnarray*}
(use $|e^{iu}-1| \leq 2|u|^\alpha$, $u\in\R$, and  the Cauchy-Schwarz inequality to obtain the second inequality above). The desired continuity property then follows from (U\ref{U1}). 
\end{proof} 

From now on, we sometimes omit the notational exponent $P$. The uniformity in Lemma~\ref{lem-dec-vp} is obtained as follows. Recall that, for any $P\in\cP_0$ fixed, Formula~(\ref{exp-val-pert}) with $t=n\in\N$ follows from the standard perturbation theory.  Similarly, using Lemma~\ref{lem-cont-enP}, Formula~(\ref{exp-val-pert}) with $t=n\in\N$ can be obtained for every $P\in\cU_0$ and for all $\zeta\in\R^d$ such that $\|\zeta\| \leq \delta$, with $\delta>0$ independent from $P_0\in\cP_0$ since $\cP_0$ is compact. Moreover the associated functions $\lambda(\cdot)$, $L_{k,t}(\cdot)$ and $R_{k,t}(\cdot)$ in (\ref{exp-val-pert}) (depending on $P$) satisfy the properties (\ref{exp-val-pert0})-(\ref{exp-val-pert2}) in a uniform way in $P\in\cP_0$ from (U3-\ref{U3}). Note that Condition~(U\ref{U4}) is useful to obtain  (\ref{exp-val-pert0-bis}) uniformly in $P\in\cP_0$. The passage from the discrete-time case to the continuous-time again follows from \cite[Prop.~4.4]{FerHerLed12} since the derivatives of $\zeta\mapsto\widehat{\cY_v}(\zeta)$ are uniformly bounded  in $(P,v)\in\cP_0\times(0,1]$ from (U3-\ref{U3}). 

From $\phi_{k,t}(\zeta) = e_k \widehat\cY_1(\zeta)^{\lfloor t \rfloor} \widehat\cY_v(\zeta) \mathbf{1}^{\top}$ 
with $v:=t-\lfloor t \rfloor\in[0,1)$
 and $\|\widehat{\cY_v}(\zeta){\bf 1}\|_\infty\leq\|\widehat{\cY_v}(0)\|_\infty\leq1$, the uniformity in Lemma~\ref{lem-non-ari}  follows from the following lemma. 
\begin{lem} \label{lem-NL-unif}
Let $\delta,A$ be such that $0<\delta<A$. Then there exist $D\equiv D(\delta,A)>0$ and $\tau\equiv \tau(\delta,A)\in(0,1)$ such that 
\begin{equation} \label{non-ari-spec} 
\forall n\in\N,\quad \sup_{ P\in{\cal P}_0}\sup_{\delta\leq\|\zeta\|\leq A} \|\widehat{\cY_1^{P}}(\zeta)^n\|_\infty \leq D\, \tau^n.
\end{equation}
\end{lem}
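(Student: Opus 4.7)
The plan is to combine three ingredients: the pointwise (non-uniform) decay provided by Lemma~\ref{lem-non-ari}, the joint continuity of $(P,\zeta)\mapsto\widehat{\cY_1^{P}}(\zeta)$ established in Lemma~\ref{lem-cont-enP}, and the compactness of
$$K := \cP_0 \times \big\{\zeta\in\R^d : \delta\leq\|\zeta\|\leq A\big\}.$$
The idea is to upgrade a pointwise strict contraction, attained after sufficiently many iterations of the Fourier kernel, into a uniform one via a finite subcover of $K$.

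First I would fix an arbitrary $(P_0,\zeta_0)\in K$ and invoke Lemma~\ref{lem-non-ari}, whose hypotheses \IP, $\M{3}$ and \emph{\textbf{(AC\ref{AS2})}} hold for $P_0$ by assumptions (U\ref{U1})--(U\ref{U3-global}). This yields an integer $n_0\equiv n_0(P_0,\zeta_0)$ such that $\|\widehat{\cY_1^{P_0}}(\zeta_0)^{n_0}\|_\infty \leq 1/3$. Joint continuity from Lemma~\ref{lem-cont-enP}, combined with continuity of matrix multiplication, then extends this bound to $\|\widehat{\cY_1^{P}}(\zeta)^{n_0}\|_\infty \leq 1/2$ on some open neighborhood $V(P_0,\zeta_0)$ of $(P_0,\zeta_0)$ in $K$.

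Next I would extract a finite subcover $V_1,\ldots,V_m$ of the compact set $K$, with associated exponents $n_1,\ldots,n_m$, and set $n_{\star}:=\mathrm{lcm}(n_1,\ldots,n_m)$. On each $V_i$, submultiplicativity of $\|\cdot\|_\infty$ gives
$$\|\widehat{\cY_1^{P}}(\zeta)^{n_{\star}}\|_\infty \leq \|\widehat{\cY_1^{P}}(\zeta)^{n_i}\|_\infty^{n_{\star}/n_i} \leq (1/2)^{n_{\star}/n_i},$$
so that $\|\widehat{\cY_1^{P}}(\zeta)^{n_{\star}}\|_\infty \leq c_{\star}<1$ uniformly on $K$, for the explicit constant $c_{\star}:=(1/2)^{n_{\star}/\max_i n_i}$. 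Writing an arbitrary $n\in\N$ as $n = q\,n_{\star}+r$ with $0\leq r<n_{\star}$, and using that $M := \sup_{0\leq r<n_{\star}}\sup_{(P,\zeta)\in K}\|\widehat{\cY_1^{P}}(\zeta)^{r}\|_\infty$ is finite (again by continuity on a compact set), I would conclude (\ref{non-ari-spec}) with $\tau := c_{\star}^{1/n_{\star}}$ and $D := M/c_{\star}$.

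The only genuinely delicate point is the very first step: making sure that the hypotheses of the non-uniform Lemma~\ref{lem-non-ari} are available at every $(P_0,\zeta_0) \in K$, i.e.\ that the non-lattice content of \emph{\textbf{(AC\ref{AS2})}} is present for each $P\in\cP_0$. This is exactly what the uniform Fourier-decay assumption (U3-\ref{U2}) supplies. The remainder of the argument is a standard compactness-and-continuity routine, and requires no new probabilistic input beyond what the paper has already set up.
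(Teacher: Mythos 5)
Your opening step misreads what Lemma~\ref{lem-non-ari} actually delivers. That lemma bounds the scalar $\phi_{k,t}(\zeta)=e_k\widehat{\cY_t}(\zeta)\mathbf{1}^{\top}=\sum_{\ell}\widehat\cY_{k,\ell,t}(\zeta)$, whereas you need the matrix norm $\|\widehat{\cY_1^{P_0}}(\zeta_0)^{n_0}\|_\infty=\max_k\sum_\ell|\widehat\cY_{k,\ell,n_0}(\zeta_0)|$; the triangle inequality runs the wrong way for you, since the entries could have dispersed phases, making $\bigl|\sum_\ell\widehat\cY_{k,\ell,n}\bigr|$ small while $\sum_\ell|\widehat\cY_{k,\ell,n}|$ stays near $1$. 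So the asserted inequality $\|\widehat{\cY_1^{P_0}}(\zeta_0)^{n_0}\|_\infty\le 1/3$ does not follow from the cited lemma. What you actually need at each point of $K$ is the strict spectral-radius inequality $r\bigl(\widehat{\cY_1^{P_0}}(\zeta_0)\bigr)<1$, which (together with $\|\widehat{\cY_1^{P_0}}(\zeta_0)^m\|_\infty\le1$ for all $m$) does give $\|\widehat{\cY_1^{P_0}}(\zeta_0)^{n_0}\|_\infty<1$ for some $n_0$. Proving $r<1$ requires exactly the non-lattice argument the paper spells out: if $r=1$, the entrywise domination $|\widehat\cY_{k,\ell,1}(\zeta_0)|\le P_0(k,\ell)$, the irreducibility of $P_0$, and a peripheral eigenvector of constant modulus force a relation $Y_1+\theta(X_1)-\theta(k)\in a+H$ $\P_k^{P_0}$-a.s., contradicting \textbf{(AC\ref{AS2})}. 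In other words, the ``genuinely delicate point'' you flag (availability of the hypotheses of Lemma~\ref{lem-non-ari}) is not the real obstacle; the real obstacle is that its conclusion is too weak, and you must redo the spectral-radius step anyway.

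Once that repair is made, the rest of your argument is sound and constitutes a legitimate alternative route: joint continuity from Lemma~\ref{lem-cont-enP}, a finite subcover of the compact $K$, powering up to a common exponent $n_\star$ (for which $n_\star:=\max_i n_i$ already suffices, since all powers of $\widehat{\cY_1^P}(\zeta)$ have $\|\cdot\|_\infty\le1$, so the lcm and the continuity argument for $M$ are unnecessary), and Euclidean division $n=qn_\star+r$. The paper instead proves the single uniform bound $\rho_0:=\sup_{K}r(\widehat{\cY_1^P}(\zeta))<1$ by a sequential-compactness contradiction and then converts $\rho_0<1$ into a geometric rate in one stroke via Cauchy's resolvent formula. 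Both variants rest on the same pointwise spectral-radius contraction, established by the same non-lattice computation; the paper's version is marginally more economical because it invokes that computation once, at the extremal limit point $(P_\infty,\zeta_\infty)$, rather than (implicitly) at every point of $K$.
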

\begin{proof}{}
The spectral radius of any matrix $T\in\cM_N(\C)$ is denoted by $r(T)$. Suppose that \begin{equation} \label{ray-spec} 
\rho_0 := \sup_{P\in{\cal P}_0}\sup_{\delta\leq\|\zeta\|\leq A} r\big(\widehat{\cY_1^{P}}(\zeta)\big)<1.
\end{equation}
Then (\ref{non-ari-spec}) holds. Indeed, consider any $\tau\in(\rho_0,1)$ and denote by $\Gamma_\tau$ the oriented circle in $\C$ centered at $0$ with radius $\tau$. Let $I$ denote the identity $N\times N$-matrix.  Property~(\ref{ray-spec}) and standard spectral calculus then give 
\begin{eqnarray*}
\sup_{ P\in{\cal P}_0}\sup_{\delta\leq\|\zeta\|\leq A} \|\widehat{\cY_1^{P}}(\zeta)^n\|_\infty &\leq& 
\sup_{ P\in{\cal P}_0}\sup_{\delta\leq\|\zeta\|\leq A} \big\|\frac{1}{2i\pi} \oint_{\Gamma_\tau} z^n\, \big(zI-\widehat{\cY_1^{P}}(\zeta)\big)^{-1}\, dz\big\|_\infty \\
&\leq& \tau^{n+1}\sup_{ P\in{\cal P}_0}\sup_{\delta\leq\|\zeta\|\leq A}\|(zI-\widehat{\cY_1^{P}}(\zeta))^{-1}\|_\infty.
\end{eqnarray*}
This proves (\ref{non-ari-spec}) since the last bound is finite from the continuity of $(P,\zeta)\mapsto \widehat{\cY_1^{P}}(\zeta)$  on the compact set $\cP_0\times\{\zeta\in\R^d : \delta\leq\|\zeta\|\leq A\}$ (Lemma~\ref{lem-cont-enP}).  

It remains to prove (\ref{ray-spec}). Assume that $\rho_0\geq1$. Then $\rho_0=1$ since $r(\widehat{\cY_1}(\zeta)) \leq \|\widehat{\cY_1}(\zeta)\|_\infty\leq\|\widehat{\cY_1}(0)\|_\infty\leq1$. Thus there exists some sequences $(P_n)_n\in\cP_0^{\N}$ and $(\zeta_n)_n\in(\R^d)^{\N}$ satisfying $\delta\leq\|\zeta_n\|\leq A$ such that 
$$\lim_n r\big(\widehat{\cY_1^{P_n}}(\zeta_n)\big) = 1$$
By compactness one may suppose that $(P_n)_n$ and  $(\zeta_n)_n$  respectively converge to some $P_\infty\in\cP_0$ and some  $\zeta_\infty\in\R^d$ such that $\delta\leq\|\zeta_\infty\|\leq A$. From  Lemma~\ref{lem-cont-enP} we obtain that 
$$\lim_n\widehat{\cY_1^{P_n}}(\zeta_n) = \widehat{\cY_1^{P_\infty}}(\zeta_\infty).$$ 
Below, $\cY_1^{P_\infty}(\zeta_\infty)$ is simply denoted by $\cY_1(\zeta_\infty)$. From the upper semi-continuity of the map $T\mapsto r(T)$ on $\cM_N(\C)$ and from $r(\widehat{\cY_1}(\zeta_\infty))\leq 1$, it follows that $r(\widehat{\cY_1}(\zeta_\infty))=1$. Write $S:=\widehat{\cY_1}(\zeta_\infty)$ to simplify. Then $S$ admits an eigenvalue $\lambda$ of modulus one. Let $f=(f(k))_{k\in\X}\in\C^N$ be an associated nonzero eigenvector. We have 
$$\forall k\in\X,\quad |\lambda f(k)| = |f(k)| \leq |(S f)(k)| \leq (\widehat{\cY_1}(0)|f|)(k) = (P|f|)(k),$$ 
where $|f|:=(|f(k)|)_{k\in\X}$. Recall that the $P_\infty$-invariant probability measure $\pi=(\pi(\ell))_{\ell\, \in\X}$ is such that $\forall \ell\in\X,\ \pi(\ell)>0$ since $P_\infty$ is irreducible. From $\pi(P_\infty|f|)=\pi(|f|)$ and the positivity of $P_\infty|f|-|f|$, it follows that $P_\infty|f|=|f|$. Thus $|f|=c{\bf 1}$ with some constant $c$. We may assume that $|f|={\bf 1}$. Equality $\lambda f = S f$ rewrites as 
$$\forall k\in\X,\quad  \lambda f(k) = \sum_{\ell=1}^N f(\ell)\, \widehat{\cY_1}(\zeta_\infty)_{k,\ell} = \sum_{\ell=1}^N f(\ell)\,  \E_{k}^{P_{\infty}}\big[1_{\{X_1=\ell\}}\, e^{i \langle \zeta_\infty , Y_1 \rangle}\big] = \E_{k}^{P_{\infty}}\big[f(X_1)\, e^{i \langle \zeta_\infty , Y_1 \rangle}\big].$$
From $|f(k)|=1$ for every $k\in\X$ and from standard convexity arguments, we obtain 
$$\forall k\in\X,\quad \lambda f(k) = f(X_1)\, e^{i \langle \zeta_\infty , Y_1 \rangle}\ \ \ \P_k^{P_\infty}-\text{a.s.}.$$
Now writing $\lambda = e^{ib}$ with $b\in\R$ and $f(\ell)=e^{ig(\ell)}$ for every $\ell\in\X$, one can deduce from the previous equality that 
$$\forall k\in\X,\quad \langle \zeta_\infty , Y_1 \rangle + g(X_1) - g(k) \in b+2\pi\Z \ \ \ \P_k^{P_\infty}-\text{a.s.}.$$
Define $a:=\frac{b}{\|\zeta_\infty\|^2}\zeta_\infty\in\R^d$ and $\theta : \X\r\R^d$ by $\theta(\ell):= \frac{g(\ell)}{\|\zeta_\infty\|^2}\zeta_\infty$. Consider the following closed subgroup $H := \frac{2\pi\Z}{\|\zeta_\infty\|^2}\, \zeta_\infty\oplus(\R\cdot\zeta_\infty)^{\perp}$ in $\R^d$. Then the previous property is equivalent to 
$$\forall k\in\X,\quad Y_1 + \theta(X_1) - \theta(k) \in a+H \ \ \ \P_k^{P_\infty}-\text{a.s.}.$$
But Assumption~(U3-\ref{U2}) obviously implies that $P_\infty$ satisfies \textbf{(AC\ref{AS2})}, so that the last property is impossible as seen in the proof of Lemma~\ref{lem-non-ari}. Property~(\ref{ray-spec}) is proved.  
\end{proof}

\appendix

\section{Proof of Lemma~\ref{Lem52}} \label{B}

Assume that  
$$\forall B'\in B(\R^{N-1}),\quad \P_k\big\{X_t=\ell, Y'_t\in B'\big\} = \int_{\R^{N-1}} g'(y)\, dy + \mu'(A)$$
avec $\mu'\perp \ell_{N-1}$. Let $B\in B(H)$. Then  
\begin{eqnarray*}
\P_k\big\{X_t=\ell, Y_t\in B\big\}& =& \P_k\big\{X_t=\ell, \Lambda(Y_t)\in \Lambda(B)\big\} \\
&=& \int_{\Lambda(B)} g'(y)\, dy + \mu'(\Lambda(B))\\
&=& \int_{\R^{N-1}} 1_B(\Lambda^{-1}(y))\, g'\big(\Lambda(\Lambda^{-1}y)\big)\, dy + \mu'(\Lambda(B)) \\
&=& \int_B (g'\circ\Lambda)(x)\, d\eta(x) + \mu'(\Lambda(B)).
\end{eqnarray*}
Let $d\mu$ be the image measure of $d\mu'$ under $\Lambda^{-1}$, that is : $\forall B\in B(H),\ \mu(B) := \mu'(\Lambda(B))$. Then we have $\mu\perp \eta$. Indeed, we know that there exist two disjoint sets $E', F'\in B(\R^{N-1})$ such that  
$$\forall B'\in B(\R^{N-1}),\quad \ell_{N-1}(B') = \ell_{N-1}(B'\cap E') \quad \text{and} \quad \mu'(B') = \mu'(B'\cap F').$$
Then, introducing $E:=\Lambda^{-1}(E')$ and $F:=\Lambda^{-1}(F')$, we clearly have for any $B\in B(H)$ 
$$\eta(B) = \ell_{N-1}(\Lambda(B)) = \ell_{N-1}\big(\Lambda(B)\cap E'\big) = \ell_{N-1}\big(\Lambda(B\cap E)\big) = \eta(B\cap E)$$
and 
$$\mu(B) = \mu'(\Lambda(B)) = \mu'\big(\Lambda(B)\cap F'\big) = \mu'\big(\Lambda(B\cap F)\big) = \mu(B\cap F),$$
so that $\eta$ and $\mu$ are supported by the two disjoint sets $E$ et $F$. 

\section{Proof of Lemma~\ref{Lem_Psi}} \label{annexeA}

Let us recall that the density $\psi_{k,\ell,t}$ is given by (\ref{densite_jointe}). We can be a little bit more precise on the properties of the coefficients invoked in (\ref{densite_jointe}). Indeed, we know from \cite{Ser99} that 
	\[  p_{k,\ell}(n+N,k_1,\ldots,k_{N-1}) := \P_k\{V_{n+N-1}^1=k_1+1,\ldots,V_{n+N-1}^{N-1}=k_{N-1}+1,Z_{n+N-1}=\ell\}
\]
with $V_n^i$ the number of visits to state $i$ at time $n$
\begin{equation} \label{nb_visites}
\forall i=1,\ldots,N-1\quad  V_n^i=\sum_{j=0}^n 1_{\{Z_j =i\}}.
\end{equation}
	Moreover, if $y=(y_1,\ldots,y_{N-1})$
	\begin{equation} \label{Coeffts}
	  x_{n;k_1,\ldots,k_{N-1}}^{t;y} :=\frac{n!}{k_1!\cdots k_{N-1}! (n-\sum_{j=1}^{N-1}k_j)!}\prod_{j=1}^{N-1} \left( \frac{y_j}{t}\right)^{k_j}\bigg(1-\frac{\sum_{j=1}^{N-1} y_j}{t}\bigg)^{n-\sum_{j=1}^{N-1} k_j} 
\end{equation}	
Recall that these coefficients satisfy the property~(\ref{binomial}).

We deduce (\ref{Bornes_jointe_Psi}) from $\sum_{\ell \in\X}p_{k,\ell}(n+N,k_1,\ldots,k_{N-1})\le 1$ and (\ref{binomial}):
\begin{eqnarray*} 
\forall y \in \R^{N-1}, \quad |\psi_{k,\ell,t}(y)|\le \sum_{j \in\X}|\psi_{k,j,t}(y)| \le 1_{\mathcal{C}_t}(y)\, a^{N-1} \sum_{n=0}^{\infty} e^{-a t} \frac{(a t)^n}{n!} \le a^{N-1}.
\end{eqnarray*}

Let us check (\ref{Borne_deriv_Psi}) for $j:=1$. We obtain 
\begin{eqnarray*}
	\lefteqn{\frac{\partial \psi_{k,\ell,t}}{\partial y_1}} \\
	& = & a^{N-1} \sum_{n=1}^{\infty} e^{-a t} \frac{(a t)^n}{n!} \frac{1}{t}\\
 &  & \times \bigg[ \sum_{\tiny \begin{array}{c}k_1\ge 1,\ldots, k_{N-1}\ge 0,\\ \sum_{j=1}^{N-1} k_j\le n\end{array}} n \, x_{n-1;k_1-1,\ldots,k_{N-1}}^{t;y}\,  p_{k,\ell}(n+N,k_1,\ldots,k_{N-1})  \\
 & & \qquad -  \sum_{\tiny \begin{array}{c}k_1\ge 0,\ldots, k_{N-1}\ge 0,\\ \sum_{j=1}^{N-1} k_j\le n-1\end{array}} n \, x_{n-1;k_1,\ldots,k_{N-1}}^{t;y}\,  p_{k,\ell}(n+N,k_1,\ldots,k_{N-1})\bigg] 
\end{eqnarray*}
so that 
\begin{eqnarray*}
	\lefteqn{\frac{\partial \psi_{k,\ell,t}}{\partial y_1} = a^{N} \sum_{n=1}^{\infty} e^{-a t} \frac{(a t)^{n-1}}{(n-1)!} }\\
 &  & \times \bigg[ \sum_{\tiny \begin{array}{c}k_1\ge 0,\ldots, k_{N-1}\ge 0,\\ \sum_{j=1}^{N-1} k_j\le n-1\end{array}}  \, x_{n-1;k_1,\ldots,k_{N-1}}^{t;y}\,  p_{k,\ell}(n+N,k_1+1,k_2,\ldots,k_{N-1})  \\
 & & \qquad -  \sum_{\tiny \begin{array}{c}k_1\ge 0,\ldots, k_{N-1}\ge 0,\\ \sum_{j=1}^{N-1} k_j\le n-1\end{array}} \, x_{n-1;k_1,\ldots,k_{N-1}}^{t;y}\,  p_{k,\ell}(n+N,k_1,\ldots,k_{N-1})\bigg] .
\end{eqnarray*}
Finally, 
\begin{eqnarray*}
	\lefteqn{\frac{\partial \psi_{k,\ell,t}}{\partial y_1}(y) = a^{N} \sum_{n=0}^{\infty} e^{-a t} \frac{(a t)^{n}}{n!}  \sum_{\tiny \begin{array}{c}k_1\ge 0,\ldots, k_{N-1}\ge 0,\\ \sum_{j=1}^{N-1} k_j\le n\end{array}}  \, x_{n;k_1,\ldots,k_{N-1}}^{t;y}\,}\\
	& &  \times \big[ p_{k,\ell}(n+N+1,k_1+1,k_2,\ldots,k_{N-1}) -  p_{k,\ell}(n+N+1,k_1,\ldots,k_{N-1})\big] .
\end{eqnarray*}
It follows that 
\begin{eqnarray*} 
\forall y \in \cC_t, \quad \big|\frac{\partial \psi_{k,\ell,t}}{\partial y_1}(y)\big|\le \sum_{\ell \in\X}\big|\frac{\partial \psi_{k,\ell,t}}{\partial y_1}(y)\big| \le a^{N} 2 \sum_{n=0}^{\infty} e^{-a t} \frac{(a t)^n}{n!} \le 2a^{N}.
\end{eqnarray*}
The same computation for each variable $y_j$ $j=2,\ldots,N-1$ gives the bound in (\ref{Borne_deriv_Psi}).

For any $t>0$, the definition of function $\Psi_t$ can be extended on the boundary $\partial \cC_t$ (see (\ref{frontier})) since the coefficients in $(\ref{Coeffts})$ are well defined on $\overline{\cC}_t$. 	
Moreover, since these coefficients as function of $y$ are continuous on $\overline{\cC}_t$ , it is easily seen that the extended version of $\Psi_t$ is continuous on $\overline{\cC}_t$ and is also denoted by $\Psi_t$ in the sequel. Next, we study the behaviour of this function on the boundary $\partial \cC_t$. 

We have  for any $y\in \big\{ (y_1,\ldots,y_{N-1})\in \overline{\cC}_t : y_i:=0\big\}$ that $x_{n;k_1,\ldots,k_{N-1}}^{t;y} = 0$ if $k_i\ge 1$, 
so that 
\begin{eqnarray*} 
\lefteqn{	\psi_{k,\ell,t}(y) := a^{N-1} \sum_{n=0}^{\infty} e^{-a t} \frac{(a t)^n}{n!}}\\
 &  & \times  \sum_{\tiny \begin{array}{c}k_1\ge 0,\ldots, k_i=0, \ldots,k_{N-1}\ge 0,\\ \sum_{j=1}^{N-1} k_j\le n\end{array}} \, x_{n;k_1,\ldots,0,\ldots, k_{N-1}}^{t;y}\,  p_{k,\ell}(n+N,k_1,\ldots,0,\ldots, k_{N-1}). \nonumber
\end{eqnarray*}
Moreover, using 
$p_{k,\ell}(n+N,k_1,\ldots,0,\ldots, k_{N-1}) \le \P_{k}\{V_{n+N-1}^i = 1\}$ and   
 (\ref{binomial}), we obtain  
\begin{subequations}
\begin{equation} \label{avantder_0}
\forall y\in \big\{ (y_1,\ldots,y_{N-1})\in \overline{\cC}_t \mid y_i:=0\big\}, \quad | \psi_{k,\ell,t}(y) | \le 
	a^{N-1} \sum_{n=0}^{\infty} e^{-a t} \frac{(a t)^n}{n!} \P_{k}\{V_{n+N-1}^i = 1\}. 
\end{equation}
Finally, for any $y\in \overline{\cC}_t$ such that $\langle y ,\mathbf{1}\rangle =t$, we have 
$x_{n;k_1,\ldots,k_{N-1}}^{t;y}  = 0$ if $\sum_{j=1}^{N-1} k_j < n$, 
so that 
\begin{eqnarray*} 
\lefteqn{	\psi_{k,\ell,t}(y) := a^{N-1} \sum_{n=0}^{\infty} e^{-a t} \frac{(a t)^n}{n!}}\\
 &  & \times  \sum_{\tiny \begin{array}{c}k_1\ge 0,\ldots, k_i \ge 0, \ldots,k_{N-1}\ge 0,\\ \sum_{j=1}^{N-1} k_j = n\end{array}} \, x_{n;k_1,\ldots, k_{N-1}}^{t;y}\,  p_{k,\ell}(n+N,k_1,\ldots, k_{N-1}). \nonumber
\end{eqnarray*}
Since $n=\sum_{j=1}^{N-1} k_j$, we have $\sum_{j=1}^{N-1} (k_j+1) = n+N-1$ and for any $(k,\ell) \in\X^2$,
\begin{equation*} 
p_{k,\ell}(n+N,k_1,\ldots,k_{N-1}) \le  \P_{k}\{V_{n+N-1}^N = 0\}. 
\end{equation*}
It allows us to write that 
\begin{equation} \label{avantder_t}
 y\in \bigg\{ (y_1,\ldots,y_{N-1})\in \overline{\cC}_t \mid \sum_{j=1}^{N-1} y_j =t\bigg\}, \quad 
 | \psi_{k,\ell,t}(y) | \le 
	a^{N-1} \sum_{n=0}^{\infty} e^{-a t} \frac{(a t)^n}{n!} \P_{k}\{V_{n+N-1}^N = 0\}. 
\end{equation}
\end{subequations}
We deduce from the next Lemma~\ref{nbvisits} and (\ref{avantder_0})-(\ref{avantder_t}) that there is  $\rho\in(0,1)$ such that 
\begin{equation*} 
	\forall y\in\partial \cC_t, \quad \|\Psi_t(y)\|_0 = O\big(e^{at(\rho-1)}(1+at)\big). 
\end{equation*}	

\begin{lem} \label{nbvisits} For a discrete time finite Markov chain $\{Z_n\}_{n\in\N}$ with transition matrix $\widetilde P$, let $V_n^i$ be the number of visits to state $i$ with respect to $n$ transitions of the Markov chain (see \emph{(\ref{nb_visites})}). Let us introduce the $(N-1)\times (N-1)$-matrix $\widehat P_{i^ci^c}:=(\widetilde P(k,\ell))_{k,\ell\in \X \backslash \{i\}}$ and assume that this matrix is irreducible and aperiodic. Let $0<\rho_i<1$ be the Perron-Frobenius eigenvalue of $\widehat P_{i^ci^c}$. We have the following geometric estimates of the probabilities in \emph{(\ref{avantder_0})-(\ref{avantder_t}) }:
\begin{enumerate}
	\item $\P_{i}\{V_{n}^i = 0\}=0$ and $\P_{k}\{V_{n}^i = 0\}=O({\rho_i}^n)$	
	\item $\P_{k}\{V_{n}^i = 1\} =O({\rho_i}^n + n {\rho_i}^{n-1})$ for $k\neq i$ and $\P_{i}\{V_{n}^i = 1\} =O({\rho_i}^n)$ 
\end{enumerate}
\end{lem}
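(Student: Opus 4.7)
The approach leverages the Perron--Frobenius theorem applied to the non-negative irreducible aperiodic matrix $\widehat P_{i^ci^c}$: since its spectral radius equals $\rho_i\in(0,1)$ and is the unique dominant eigenvalue, there is a constant $C>0$ such that
$$\forall m\in\N,\quad \|(\widehat P_{i^ci^c})^m\|_0 \leq C\, \rho_i^m.$$
All the estimates will then follow from first-passage (taboo) decompositions for $\{Z_n\}_{n\in\N}$ combined with this geometric bound.

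For part (1), the identity $\P_i\{V_n^i=0\}=0$ is immediate because $Z_0=i$ already contributes one visit to $i$. For $k\neq i$, the Markov property gives $\P_k\{V_n^i=0\}=\P_k\{Z_1\neq i,\ldots,Z_n\neq i\}=\big[(\widehat P_{i^ci^c})^n\mathbf{1}\big]_k=O(\rho_i^n)$.

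For part (2) with $k=i$, the unique visit must occur at time $0$, so
$$\P_i\{V_n^i=1\}=\P_i\{Z_1\neq i,\ldots,Z_n\neq i\}=\sum_{\ell\neq i}\widetilde P(i,\ell)\,\big[(\widehat P_{i^ci^c})^{n-1}\mathbf{1}\big]_\ell =O(\rho_i^{n-1})=O(\rho_i^n)$$
after absorbing $\rho_i^{-1}$ into the constant. For $k\neq i$, I would decompose according to the (necessarily unique) time $j\in\{1,\ldots,n\}$ at which the chain visits $i$. The strong Markov property gives
$$\P_k\{V_n^i=1\}=\sum_{j=1}^{n}\P_k\{Z_1\neq i,\ldots,Z_{j-1}\neq i,Z_j=i\}\cdot \P_i\{Z_1\neq i,\ldots,Z_{n-j}\neq i\},$$
with the convention that the second factor equals $1$ when $j=n$ and the first factor reduces to $\widetilde P(k,i)$ when $j=1$ (via $(\widehat P_{i^ci^c})^0=I$). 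The first factor is bounded by $\|(\widehat P_{i^ci^c})^{j-1}\|_0\cdot\|\widetilde P\|_\infty=O(\rho_i^{j-1})$, and for $j<n$ the second factor is $O(\rho_i^{n-j-1})$. Hence the boundary term $j=n$ contributes $O(\rho_i^{n-1})=O(\rho_i^n)$, while the sum over $j=1,\ldots,n-1$ contributes $O\bigl((n-1)\rho_i^{n-2}\bigr)=O(n\rho_i^{n-1})$, giving the desired $O(\rho_i^n+n\rho_i^{n-1})$.

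There is essentially no serious obstacle: the whole argument reduces to standard taboo-probability manipulations and the geometric decay of the sub-stochastic matrix $\widehat P_{i^ci^c}$. The only mild subtlety is the careful handling of the boundary indices $j=1$ and $j=n$, which is precisely what gives rise to the additive $\rho_i^n$ term alongside the main $n\rho_i^{n-1}$ contribution in (2).
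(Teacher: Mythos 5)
Your proof is correct and arrives at essentially the same bound as the paper. The paper sets up a first-step renewal recursion $v_i(n+1)=\widehat P_{i^ci^c}\,v_i(n)+\P_i\{V_n^i=1\}\,P_{\{i\}^c\{i\}}$ and then unrolls it into a convolution sum; your decomposition on the (necessarily unique) hit time $j$ of state $i$, via the strong Markov property, produces exactly that same convolution directly. The bookkeeping of the boundary indices $j=1$ and $j=n$ and the resulting $O(\rho_i^n+n\rho_i^{n-1})$ estimate match the paper's computation, and your treatment of the $\P_i\{V_n^i=1\}$ case as $P_{\{i\}\{i\}^c}(\widehat P_{i^ci^c})^{n-1}\mathbf{1}^\top$ is in fact cleaner than the paper's notation.
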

\begin{rem} Note that the irreducibility of the sub-generators $G_{i^ci^c}$ allows us to derive exponential rate of convergence of $\|\Psi_t(y)\|_0$ on $\partial C_t$ when $t$ growths to infinity, given that only a rate in $1/t$ is required in Condition~\emph{\textbf{(AC2)}}.
\end{rem}
\begin{proof}{}
Since $\widehat P_{\{i\}^c\{i\}^c}$ is irreducible and aperiodic,  $\| \widehat P_{\{i\}^c\{i\}^c}\mathbf{1}^{\top} \|_{\infty}\| \le \|\widehat P_{\{i\}^c\{i\}^c} \|_{\infty} \le C \rho_i$ for some constant $C$ and $\rho_i\in(0,1)$ is the Perron-Frobenius eigenvalue of  $\widehat P_{\{i\}^c\{i\}^c}$.

For the first  assertion, note that $\P_{i}\{V_{n}^i = 0\}=0$ since $V_n^{i} \ge 1$ and for $k\neq i$
\begin{eqnarray}
\forall n\ge 0, \quad	\P_k\{V_{n}=0\} & = & \P_k\{\sum_{\ell=1}^{n} 1_{\{Z_\ell=i\}}=0\}  =   \P_k\{Z_1\neq i,Z_2\neq i, \ldots, Z_{n}\neq i\} \nonumber\\
	& = & \sum_{\ell\in\X}(\widehat P_{\{i\}^c\{i\}^c})^{n}(k,\ell) = e_k (\widehat P_{\{i\}^c\{i\}^c})^{n} \mathbf{1}^{\top}. \label{PiVn}
\end{eqnarray}
Then, it follows that $\P_{k}\{V_{n}^i = 0\}=O({\rho_i}^n)$.

For the second assertion, note that $\P_{i}\{V_{n}^i = 1\}=\P_{i}\{\sum_{j=1}^{n} 1_{\{Z_j=i\}}= 0\}= e_i (\widehat P_{\{i\}^c\{i\}^c})^{n} \mathbf{1}^{\top}$ so that, we deduce that $\P_{i}\{V_{n}^i = 1\}= O({\rho_i}^n)$.
For $k\neq i $, we can write the following renewal equation 
\begin{eqnarray*}
\forall n \ge 0, \quad 	\P_k\{V_{n+1}=1\} & = & \sum_{j\in\X} P(k,j)\P_j\{V_n=1\} \\
& = & \sum_{j\neq i} P(k,j)\P_j\{V_n=1\} + P(k,i) \P_i\{V_n=1\} \\
\end{eqnarray*}
with $ \P_j\{V_0=1\}=0$ if $j\neq i$ and $\P_i\{V_0=1\}=1$. Let us introduce the following column vector $v_i(n+1) = (\P_k\{V_{n+1}=1\})_{k\neq i}$. Then we have from the previous equation
\begin{eqnarray*}
	\forall n\ge 0, \quad v_i(n+1) &= & P_{\{i\}^c\{i\}^c} v_i(n) +  \P_i\{V_n=1\} P_{\{i\}^c\{i\}}, \qquad v_i(0)=0, \P_i\{V_0=1\}=1.
\end{eqnarray*}
Then, we can obtain the following representation of vector $v_i(n+1)$
\begin{eqnarray}
	v_i(n+1) = \sum_{k=0}^n \P_i\{V_k=1\} {P_{\{i\}^c\{i\}^c}}^{n-k} P_{\{i\}^c\{i\}}.
\end{eqnarray}
Note that $\P_i\{V_k=1\}$ is known from (\ref{PiVn}). We obtain 
\begin{eqnarray*}
	v_i(n+1) & = & {P_{\{i\}^c\{i\}^c}}^{n} P_{\{i\}^c\{i\}} + \sum_{k=1}^n \left( P_{\{i\}\{i\}^c}{P_{\{i\}^c\{i\}^c}}^{k-1} \, 1^{\top}\right) \times {P_{\{i\}^c\{i\}^c}}^{n-k} P_{\{i\}^c\{i\}}.
\end{eqnarray*}
The $k$th component of the vector is given by 
\begin{eqnarray*}
	\P_k\{V_{n+1}=1\} &= &{e_k}^{\top} v_i(n+1) \nonumber\\
	& = & {e_k}^{\top}{P_{\{i\}^c\{i\}^c}}^{n} P_{\{i\}^c\{i\}} \nonumber \\
	& & + \sum_{k=1}^n \left( P_{\{i\}\{i\}^c}{P_{\{i\}^c\{i\}^c}}^{k-1} \, 1^{\top}\right) \times {e_k}^{\top}{P_{\{i\}^c\{i\}^c}}^{n-k} P_{\{i\}^c\{i\}} \nonumber \\
	& \le &  {e_k}^{\top}{P_{\{i\}^c\{i\}^c}}^{n}1^{\top}  + \sum_{k=1}^n \left( P_{\{i\}\{i\}^c}{P_{\{i\}^c\{i\}^c}}^{k-1} \, 1^{\top}\right) \times {e_k}^{\top}{P_{\{i\}^c\{i\}^c}}^{n-k} \, 1^{\top}.
\end{eqnarray*}
Since $\|P_{\{i\}^c\{i\}^c}\|\le C \rho_i$, there exists $K$ such that 
\begin{equation*}
	\P_k\{V_{n+1}=1\}  \le   K({\rho_i}^{n} + \sum_{k=1}^n {\rho_i}^{k-1} \, {\rho_i}^{n-k}) = K({\rho_i}^{n} + n {\rho_i}^{n-1}) .
\end{equation*}
\vspace*{-1cm}
\end{proof}

\section{Linear transformation of MAPs}

Let $\{(X_t,Y_t)\}_{t\in \T}$ be an MAP  with state space $\X \times \R^d$, where  $\X:=\{1,\ldots,N\}$ and $\T:=\N$ or $\T:=[0,\infty)$. Recall from \cite{Asm03} that  $\{(X_t,Y_t)\}_{t\in \T}$ is a Markov process with  a transition semi-group, denoted by $\{Q_t\}_{t\in\T}$, which satisfies 
	\begin{equation} \label{Def_Add}
	 \forall (k,y)\in\X\times \R^d, \ \forall (\ell,B) \in \X\times B(\R^d),\quad 
	 Q_t(k,y;\{\ell\}\times B)=  Q_t(k,0;\{\ell\}\times B-y).
\end{equation}
Note that $Q_t(k,y;\{\ell\}\times B) := \P_k\{X_t=\ell,Y_t\in B\} = \cY_{k,\ell,t}(1_B)$. 
Let us consider any  linear transformation $T:\R^d \r \R^m$ and introduce the $\X\times \R^m$-valued process $\{(X_t,TY_t)\}_{t\in \T}$. 
\begin{lem} \label{lem_TMAP} The process $\{(X_t,TY_t)\}_{t\in \T}$ is an MAP with state space $\X\times \R^m$ and transition semi-group $Q^{(T)}_t$ defined by: $\forall (k,z)\in\X\times \R^m, \ \forall (\ell,B) \in \X\times B(\R^m)$,
	\begin{equation} \label{Def_TMAP}
	 Q^{(T)}_t(k,z;\{\ell\}\times B)=  Q_t(k,0;\{\ell\}\times T^{-1}(B-z)).
\end{equation}
\end{lem}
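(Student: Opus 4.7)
The plan is to verify directly the two defining properties of an MAP for the transformed process $\{(X_t,TY_t)\}_{t\in\T}$: first, the Markov property with the claimed transition semi-group $Q^{(T)}_t$, and second, the additivity condition (\ref{Def_Add}) for $Q^{(T)}_t$.

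First I would establish the Markov property. Let $\cF_s$ denote the natural filtration of $\{(X_u,Y_u)\}_{u\le s}$, which also carries $\{(X_u,TY_u)\}_{u\le s}$ since $T$ is deterministic. For any $\ell\in\X$ and $B\in B(\R^m)$, the event $\{TY_{s+t}\in B\}$ coincides with $\{Y_{s+t}\in T^{-1}(B)\}$ (preimage of $B$ under $T$). Applying the MAP property of $\{(X_t,Y_t)\}_{t\in \T}$ together with the additivity relation (\ref{Def_Add}), I obtain
\begin{align*}
\P\bigl(X_{s+t}=\ell,\,TY_{s+t}\in B\,\big|\,\cF_s\bigr)
&=Q_t\bigl(X_s,Y_s;\{\ell\}\times T^{-1}(B)\bigr)\\
&=Q_t\bigl(X_s,0;\{\ell\}\times (T^{-1}(B)-Y_s)\bigr).
\end{align*}

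The key elementary observation is that, since $T$ is linear,
$$T^{-1}(B)-Y_s \;=\; T^{-1}\bigl(B-TY_s\bigr),$$
because $y+Y_s\in T^{-1}(B)$ iff $T(y+Y_s)=Ty+TY_s\in B$ iff $y\in T^{-1}(B-TY_s)$. Substituting gives
$$\P\bigl(X_{s+t}=\ell,\,TY_{s+t}\in B\,\big|\,\cF_s\bigr)=Q_t\bigl(X_s,0;\{\ell\}\times T^{-1}(B-TY_s)\bigr),$$
which depends on $\cF_s$ only through $(X_s,TY_s)$. Hence $\{(X_t,TY_t)\}_{t\in \T}$ is a Markov process whose transition kernel is precisely the $Q^{(T)}_t$ defined in (\ref{Def_TMAP}), and the semi-group property is inherited from that of $\{Q_t\}_{t\in\T}$.

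It then remains to check that $Q^{(T)}_t$ satisfies the MAP additivity condition (\ref{Def_Add}) on $\X\times\R^m$. Directly from the definition, for any $(k,z)\in\X\times\R^m$ and $(\ell,B)\in\X\times B(\R^m)$,
$$Q^{(T)}_t(k,z;\{\ell\}\times B)=Q_t\bigl(k,0;\{\ell\}\times T^{-1}(B-z)\bigr)=Q^{(T)}_t\bigl(k,0;\{\ell\}\times (B-z)\bigr),$$
where the second equality is just the definition of $Q^{(T)}_t$ applied with the base point $0$ and target $B-z$. This yields (\ref{Def_Add}) for the transformed kernel and finishes the proof. There is no real obstacle here; the only non-bookkeeping step is the set-theoretic identity $T^{-1}(B)-y=T^{-1}(B-Ty)$, which is immediate from linearity of $T$ and does not require $T$ to be invertible.
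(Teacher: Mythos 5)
Your argument is correct and follows essentially the same route as the paper: you compute the conditional law of $(X_{s+t},TY_{s+t})$ via the additivity relation (\ref{Def_Add}) and the identity $T^{-1}(B)-Y_s=T^{-1}(B-TY_s)$, then observe that the result is $\sigma(X_s,TY_s)$-measurable so that the Markov property with respect to the natural filtration of $\{(X_t,TY_t)\}$ follows (the paper spells out this last reduction by an explicit tower-rule computation between $\F^{(X,Y)}_s$ and $\F^{(X,TY)}_s$, which is exactly the step you leave implicit). The additivity verification for $Q^{(T)}_t$ is identical to the paper's.
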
 
Note that we only have to prove that $\{(X_t,TY_t)\}_{t\in \T}$ is a Markov process with transition semi-group defined by (\ref{Def_TMAP}), since the additivity property for $\{(X_t,TY_t)\}_{t\in \T}$ is clearly satisfied from (\ref{Def_TMAP}):
	\begin{eqnarray*}
	 Q^{(T)}_t(k,z;\{\ell\}\times B) & = & Q_t(k,0;\{\ell\}\times T^{-1}(B-z)) \\
	 & =: & Q^{(T)}_t(k,0;\{\ell\}\times B-z).
\end{eqnarray*}
\begin{proof}{}
Let $\F^{(X,Y)}_t:=\sigma(X_u,Y_u,\ u\le t)$ and $\F^{(X,TY)}_t:=\sigma(X_u,TY_u,\ u\le t)$  be the filtration generated by the processes $\{(X_t,Y_t)\}_{t\in \T}$ and $\{(X_t,TY_t)\}_{t\in \T}$ respectively. Since $\{(X_t,Y_t)\}_{t\in\T}$ is an MAP with transition semi-group $\{Q_t\}_{t\in\T}$, we have by definition for any bounded function $g$ on $\X\times \R^d$
\begin{eqnarray*}
	\E\left[ g(X_{t+s},Y_{t+s}) \mid \F_s^{(X,Y)}\right] & = & \sum_{\ell\in\X}\int_{\R^d} g(\ell,y_1) Q_t(X_s,Y_s;\{\ell\}\times dy_1)\\
	 & = & \sum_{\ell\in\X}\int_{\R^d} g(\ell,y_1+Y_s) Q_t(X_s,0;\{\ell\}\times dy_1).
\end{eqnarray*}
Using the tower rule and the last representation of the condition expectation, we obtain 
for any $\ell\in\X$ and $B\in B(\R^m)$
\begin{eqnarray*}
	\E\left[ 1_{\{\ell\}\times B}(X_{t+s},TY_{t+s}) \mid \F_s^{(X,TY)} \right] & = & 	\E\left[ \E\left[  1_{\{\ell\}\times B}(X_{t+s},TY_{t+s}) \mid \F_s^{(X,Y)}\right] \mid \F_s^{(X,TY)}\right]\\
	& = & \E\left[ 	\int_{\R^d}  1_{B}(T(y_1+Y_s)) Q_t(X_s,0;\{\ell\}\times dy_1)	   \mid \F_s^{(X,TY)}\right] \\
		& = & \E\left[ \int_{\R^d}  1_{B}(Ty_1+TY_s) Q_t(X_s,0;\{\ell\}\times dy_1)	   \mid \F_s^{(X,TY)}\right] \\
		& = & \int_{\R^d} 1_{B}(Ty_1+TY_s) Q_t(X_s,0;\{\ell\}\times dy_1) \\
				& = & \int_{\R^d} 1_{T^{-1}(B-TY_s)}(y_1) Q_t(X_s,0;\{\ell\}\times dy_1) \\
								& = & Q_t(X_s,0;\{\ell\}\times T^{-1}(B-TY_s)).	 
\end{eqnarray*}
Then $\{(X_t,TY_t)\}_{t\in \T}$ is a Markov process with transition semi-group given by (\ref{Def_TMAP}).
\end{proof}

\bibliographystyle{alpha}

\end{document}